\documentclass[12pt, reqno]{amsart}
\usepackage{amsmath, amsthm, amscd, amsfonts, amssymb, graphicx, hyperref, color}
\usepackage[utf8]{inputenc}
\usepackage[T1]{fontenc}
\usepackage{enumitem}
\usepackage{mathtools}
\usepackage{empheq}
\newtheorem{theorem}{Theorem}[section]
\newtheorem{lemma}[theorem]{Lemma}
\newtheorem{proposition}[theorem]{Proposition}
\newtheorem{corollary}[theorem]{Corollary}
\theoremstyle{definition}

\theoremstyle{remark}
\newtheorem{remark}[theorem]{Remark}
\numberwithin{equation}{section}

\begin{document}
\setcounter{page}{1}


\title[Harnack parts for 5-by-5 truncated shift]
      {Harnack parts for 5-by-5 truncated shift with numerical radius one}
       
\author[ M. Benharrat]{Mohammed Benharrat$^{1}$}
\address{$^{1}$ Ecole Nationale Polytechnique d'Oran-Maurice Audin (Ex. ENSET d'Oran), 
	BP 1523 Oran-El M'naouar, 31000 Oran, Alg\'{e}rie.
 Laboratory of Fundamental and Applicable Mathematics of Oran (LMFAO)}
\email{\textcolor[rgb]{0.00,0.00,0.84}{mohammed.benharrat@enp-oran.dz}}


\subjclass[2010]{Primary 47A12, 47A20, 47A65; Secondary 15A60
}

\keywords{Truncated shift, $\rho$-contractions, Harnack parts, Operator kernel, Numerical range,  Numerical radius.}
\date{\today.
}
\begin{abstract} 
	We provide a complete description of the Harnack part for normalized truncated shift of size five with numerical radius  one. We prove that any operator in this Harnack component must assume one of two distinct forms: either it belongs to the unitary orbit of the shift, thereby preserving its norm, or it is a structured nilpotent matrix with a different norm. Using polynomial methods derived from the kernel conditions, we establish that any element is necessarily nilpotent with same order of the truncated shift. These results reveal that the Harnack equivalence class exhibits a significantly richer algebraic structure in dimension five than previously observed in lower-dimensional cases.
\end{abstract}
 \maketitle
\section{Introduction}

Let $H$ be a complex Hilbert space and denote by $B(H)$ the algebra of all bounded linear operators on $H$.
In the finite-dimensional case where $\dim(H)=n$, $B(H)$ is identified with the algebra of
$n\times n$ complex matrices.

Let $\rho >0$. An operator $T \in B(H)$  is said to be  a \emph{$\rho$-contraction} 
if $T$ admits a \emph{unitary $\rho$-dilation}. 
This means that there exist a Hilbert space $\mathcal{H}$ containing $H$ as a closed subspace and a 
unitary operator $U \in B(\mathcal{H})$ such that 
\begin{equation}\label{eq:rhodil}
	T^n = \rho P_H U^n|_H, \quad n \in \mathbb{N}^{\ast},
\end{equation}
where $P_H$ is the orthogonal projection from $\mathcal{H}$ onto $H$. Let $C_\rho(H)$ denote 
the set of all $\rho$-contractions. These classes were introduced by B. Sz.-Nagy and C. Foia\c{s} 
in \cite{SzNF1} (see also \cite{SzNFBK}). The class $C_1(H)$ coincides with the class of all 
contractions, i.e., operators $T$ such that $\|T\| \leq 1$ \cite{SzN}. The class $C_2(H)$ corresponds 
precisely to the set of all operators $T \in B(H)$ whose numerical radius is less than or equal to one \cite{Be}. Recall  that for an operator $T \in B(H)$, the numerical radius is defined by
\[ w(T) = \sup\{ |\langle Tx,x\rangle | : x \in H,\ \|x\| = 1\} \]
 More generally,   the \emph{$\rho$-numerical radius} (or \emph{operator radius}) of an operator 
$T$ is defined by
\begin{equation}\label{eq:1-rad}
	w_\rho(T) := \inf\{ \gamma > 0 : \tfrac{1}{\gamma}T \in C_\rho(H)\},
\end{equation}
see \cite{H1} and \cite{W}. Thus, operators in $C_\rho(H)$ are contractions with respect to the 
\emph{$\rho$-numerical radius}, which means $T \in C_\rho(H)$ if and only if $w_\rho(T) \leq 1$. Note that $w_1(T) = \|T\|$, $w_2(T) = w(T)$, and 
$\lim_{\rho \to \infty} w_\rho(T) = r(T)$, where $r(T)$ is the \emph{spectral radius} of $T$.

A fundamental tool in the study of the classes $C_\rho(H)$ is the operator-valued $\rho$-kernel associated with any bounded operator $T$ whose spectrum lies in the closed unit disk $\overline{\mathbb{D}}$:
\begin{equation}\label{eq:kernel}
	K_{z}^{\rho}(T) = (I - \overline{z}T)^{-1} + (I - zT^{*})^{-1} + (\rho - 2)I, \quad z \in \mathbb{D}.
\end{equation}
This kernel provides a complete characterization of $\rho$-contractions: an operator $T$ belongs to $C_\rho(H)$ if and only if $\sigma(T) \subseteq \overline{\mathbb{D}}$ and $K_{z}^{\rho}(T) \geq 0$ for all $z \in \mathbb{D}$ (see \cite{CaF_2}).

For $T_0, T_1 \in C_\rho(H)$, recall that $T_1$ is \emph{Harnack dominated} by $T_0$, denoted by $T_{1} \stackrel{H}{\prec} T_{0}$ (see \cite{CaSu}), if there exists a constant $c \geq 1$ such that
\begin{equation*}
	\operatorname{Re} p(T_1) \leq c^{2} \operatorname{Re} p(T_0) + (c^{2}-1)(\rho - 1)\operatorname{Re} p(O_{H})
\end{equation*}
for any polynomial $p$ with $\operatorname{Re} p \geq 0$ on $\overline{\mathbb{D}}$, where $\operatorname{Re} z$ denotes the real part of a complex number $z$ and $O_H$ denotes the zero operator on $H$. A detailed description of Harnack domination and other equivalent definitions is given in \cite[Theorem 3.1]{CaSu}. In particular, it is proved in \cite{CaSu} that Harnack domination is equivalent to
\begin{equation}\label{harnack}
	K_{z}^{\rho}(T_{1}) \leq c^{2} K_{z}^{\rho}(T_{0}) \quad \text{for all } z \in \mathbb{D},
\end{equation}
for some constant $c \geq 1$. Thus, the operator-valued $\rho$-kernel plays a central role in the Harnack analysis of operators (see, for instance, \cite{Cassier_2}, \cite{CaSu}, \cite{CaBeBel2018}, and \cite{CassierBenharrat2020}). This is primarily due to the fact that it allows us to apply harmonic analysis methods directly within the setting of operator theory.

The relation $\stackrel{H}{\prec}$ is a preorder (reflexive and transitive) on $C_\rho(H)$ and induces an equivalence relation known as \emph{Harnack equivalence}. The associated equivalence classes are called the \emph{Harnack parts} of $C_\rho(H)$. Thus, we say that $T_{1}$ and $T_{0}$ are Harnack equivalent, denoted by $T_{1} \stackrel{H}{\sim} T_{0}$, if they belong to the same Harnack part. Classifying the equivalence classes induced by this preorder is a challenging problem and remains an active topic of research.

Foia\c{s} \cite{Foias} proved that, in the case $\rho=1$, the Harnack part of $C_1(H)$ containing the zero operator $O_H$ is exactly the class of all strict contractions ($\|T\| < 1$). This work was extended to $C_\rho(H)$ by Cassier and Suciu \cite[Theorem 4.4]{CaSu}, who proved that the equivalence class of the zero operator $O_H$ is exactly the class of all strict $\rho$-contractions (i.e., operators $T \in C_\rho(H)$ such that $w_{\rho}(T) < 1$).

An interesting question is to describe the Harnack parts of $\rho$-contractions $T$ with $\rho$-numerical radius equal to one. Few results address this question in the literature, primarily for $C_1(H)$ operators with norm one; see \cite{AnSuTi}, \cite{KSS}, and \cite{BaTiSu}. The case $\rho \neq 1$ is addressed in \cite{CaBeBel2018}, \cite{CassierBenharrat2020}, and \cite{CaNaBe2024}. In particular, results are available for the truncated shift viewed as a $2$-contraction with numerical radius one in dimensions less than four.

Let the normalized truncated shift $S$ of size $n+1$ be defined in the canonical basis of $\mathbb{C}^{n+1}$ by
\begin{equation}\label{S-truncated shift n+1}
	S = S_{n+1}(a) = \begin{bmatrix}
		0 & a & 0 & \cdots & 0 \\
		0 & 0 & \ddots & \ddots & \vdots \\
		\vdots & \ddots & \ddots & \ddots & 0 \\
		\vdots & & \ddots & 0 & a \\
		0 & \cdots & \cdots & 0 & 0
	\end{bmatrix},
\end{equation} 
where $a = \left(\cos\dfrac{\pi}{n+2}\right)^{-1}$. 
It was established in \cite[Theorem 3.7]{CassierBenharrat2020} that if $T \in C_2(\mathbb{C}^{n+1})$ satisfies $T \stackrel{H}{\sim} S$ and $\|T\| = \|S\| = \left(\cos\left(\dfrac{\pi}{n+2}\right)\right)^{-1}$, then $T=S$ if $n+1$ is an odd natural number, whereas $T$ lies in the unitary orbit of $S$ if $n+1$ is an even natural number.

It was shown that the Harnack part of a truncated shift of order two is trivial \cite[Theorem 3.3]{CaBeBel2018}, while the Harnack part of $S_3(a)$ with $a=\sqrt{2}$ corresponds to an orbit associated with the action of a group of unitary diagonal matrices,
\begin{equation}\label{S3}
	T = U_{\theta}^{*} S_3 U_\theta \quad \text{with} \quad U_\theta =
	\begin{bmatrix}
		e^{i\theta} & 0 & 0 \\
		0 & 1 & 0 \\
		0 & 0 & e^{i\theta} 
	\end{bmatrix},\qquad \theta \in \mathbb{R}.
\end{equation}
Thus, all elements of the Harnack part of $S_3$ are unitarily equivalent to $S_3$; see \cite[Theorem 3.1]{CaBeBel2018}. 

Recently, Cassier, Naimi, and Benharrat \cite{CaNaBe2024} provided a complete description of the Harnack part of $S$ for dimension $4$. More precisely, the Harnack part of $S$ in $C_2(\mathbb{C}^{4})$ consists of all matrices $T \in C_2(\mathbb{C}^4)$ of the form
\begin{equation*}
	T=S \quad \text{or} \quad T = 2\begin{bmatrix}
		0 & -\dfrac{1}{a} & 0 & 0 \\
		0 & 0 & \left(1-\dfrac{1}{a^2}\right) & 0 \\
		0 & 0 & 0 & -\dfrac{1}{a} \\
		0 & 0 & 0 & 0 \\
	\end{bmatrix},
\end{equation*}
where $a = \dfrac{1}{\cos(\pi/5)}$, see \cite[Theorem 3.3]{CaNaBe2024}.
This result highlights that the rigidity observed in Theorem 3.7 of \cite{CassierBenharrat2020}  does not necessarily persist in higher dimensions. In light of this fact, it is natural to investigate the structure of the Harnack part of the truncated shift $S$ in dimensions greater than four.

The purpose of this paper is to analyze the Harnack part of truncated shifts of size $5$ whose numerical radius is equal to one. We prove that the Harnack part of $S$ in $C_2(\mathbb{C}^{5})$ contains all matrices $T \in C_2(\mathbb{C}^5)$ of the form
\begin{equation}\label{T case123}
	T = \dfrac{2}{\sqrt{3}} \begin{bmatrix}
		0 & 1 & 0 & 0 & 0 \\
		0 & 0 & e^{i\theta} & 0 & 0 \\
		0 & 0 & 0 & e^{-i\theta} & 0 \\
		0 & 0 & 0 & 0 & 1 \\
		0 & 0 & 0 & 0 & 0
	\end{bmatrix},\quad \text{or} \quad 
	T = \begin{bmatrix}
		0 & -\sqrt{3} & 0 & 0 & 0 \\
		0 & 0 & \dfrac{1}{\sqrt{2}}e^{i\theta} & 0 & 0 \\
		0 & 0 & 0 & \dfrac{1}{\sqrt{2}}e^{-i\theta} & 0 \\
		0 & 0 & 0 & 0 & -\sqrt{3} \\
		0 & 0 & 0 & 0 & 0
	\end{bmatrix},
\end{equation} 
with $\theta \in \mathbb{R}$. 

Our main result reveals that the structure of this equivalence class is richer than previously known for lower dimensions. Specifically, we establish that any operator $T$ Harnack equivalent to $S$ must assume one of two distinct forms: either it lies in the unitary orbit of $S$ with equal norm, or it is a specific structured matrix with different norm properties. This finding not only completes the analysis of Harnack parts for this specific case but also highlights a significant distinction between the Harnack part in dimension $5$ and those in dimensions $2$, $3$, and $4$.
\section{Harnack parts for 5-by-5 truncated shift}

Let $S$ be the $w_2$-normalized truncated shift in $C_{2}(\mathbb{C}^{5})$, defined in the canonical basis of $\mathbb{C}^5$ by
\begin{equation}\label{truncated shift}
	S = a\begin{bmatrix}
		0 & 1 & 0 & 0 & 0 \\
		0 & 0 & 1 & 0 & 0 \\
		0 & 0 & 0 & 1 & 0 \\
		0 & 0 & 0 & 0 & 1 \\
		0 & 0 & 0 & 0 & 0
	\end{bmatrix},
\end{equation} 
where $a = \dfrac{1}{\cos(\pi/6)}$ (ensuring that $w(S)=1$).

In this section, we provide a complete description of the Harnack part of $S$. We show that this Harnack part exhibits structural features that differ significantly from previously known results in lower dimensions.

Suppose $T$ belongs to the Harnack part of $S$. By \cite[Theorem~2.4]{CaNaBe2024}, with respect to the orthogonal decomposition $\mathbb{C}^{5}= \mathbb{C}e_0 \oplus (\mathbb{C}e_1 \oplus\mathbb{C}e_{2}\oplus\mathbb{C}e_3)\oplus\mathbb{C}e_4$, the operator $T$ necessarily admits the block form
\begin{equation}\label{form of T}
	T = \begin{bmatrix}
		0 & a_1 & a_2 & a_3 & 0 \\
		0 & r_{11} & r_{12} & r_{13} & b_1 \\
		0 & r_{21} & r_{22} & r_{23} & b_2 \\
		0 & r_{31} & r_{32} & r_{33} & b_3 \\
		0 & 0 & 0 & 0 & 0 \\
	\end{bmatrix} 
	= \begin{bmatrix}
		0 & A & 0 \\
		0 & R & B \\
		0 & 0 & 0 \\
	\end{bmatrix},
\end{equation} 
where 
\[
A = \begin{bmatrix} a_1 & a_2 & a_3 \end{bmatrix}, \quad 
B = \begin{bmatrix} b_1 \\ b_2 \\ b_3 \end{bmatrix}, \quad \text{and} \quad 
R = \begin{bmatrix} 
	r_{11} & r_{12} & r_{13} \\
	r_{21} & r_{22} & r_{23} \\ 
	r_{31} & r_{32} & r_{33} 
\end{bmatrix}.
\]

Recall that the operator-valued $2$-kernel is defined by
\begin{equation}\label{kernel-2}
	K_{z}^{2}(T) = (I-\overline{z}T)^{-1}+(I-zT^{*})^{-1}, \quad z\in \mathbb{D}.
\end{equation}

By Proposition~3.4 of \cite{CassierBenharrat2020}, we obtain the following characterization.

\begin{corollary}\label{Corollary2S5}
	Let $T \in C_2(\mathbb{C}^{5})$. Then the following assertions are equivalent:  
	\begin{enumerate}
		\item[(i)] $T$ belongs to the Harnack part of the $w_2$-normalized truncated shift $S$ in $C_2(\mathbb{C}^{5})$.
		\item[(ii)] The following conditions hold:
		\begin{itemize}
			\item[(a)] The matrix $T$ has the block representation given in \eqref{form of T}, with $\sigma(R)\subset \mathbb{D}$;  
			\item[(b)] For all $z\in \mathbb{T}$, 
			\begin{equation}\label{KER}		
				K_z^2(T)\, v(z) = 0, \quad \text{where} \quad v(z) =
				\begin{bmatrix}
					v_0 \\
					v_1 z \\
					0 \\
					-v_1 z^3 \\
					-v_0 z^4
				\end{bmatrix};
			\end{equation}
			\item[(c)] We have
			\begin{align}
				&\det(R) = 0, \label{p1S5}\\	
				&M(z) := I_{2} - \frac{1}{4}(A^*A + BB^*) - \operatorname{Re}(\overline{z}R) \geq 0, \quad \forall z\in \mathbb{T}, \label{p2S5}
			\end{align}
			where $I_{2}$ denotes the identity matrix of size $3$.
		\end{itemize}
	\end{enumerate}
\end{corollary}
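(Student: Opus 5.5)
This corollary is a specialization of Proposition~3.4 of \cite{CassierBenharrat2020} to $n+1=5$, combined with the block structure from \cite[Theorem~2.4]{CaNaBe2024}. The plan is to identify each ingredient of that proposition in our setting and check that the general conditions become (a), (b), (c).

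First, recall what the general proposition says. For $T\in C_2(\mathbb{C}^{n+1})$, $T$ is Harnack equivalent to $S_{n+1}(a)$ if and only if three things hold:
\begin{enumerate}
\item[(1)] $T$ has the block form $\begin{bmatrix}0&A&0\\0&R&B\\0&0&0\end{bmatrix}$ relative to $\mathbb{C}e_0\oplus\operatorname{span}\{e_1,\dots,e_{n-1}\}\oplus\mathbb{C}e_n$, with $\sigma(R)\subset\mathbb{D}$;
\item[(2)] the kernel $K^2_z(T)$ annihilates, for $z\in\mathbb{T}$, the same vectors as $K^2_z(S)$;
\item[(3)] a Schur-complement positivity condition holds, together with a determinant (non-invertibility) condition on $R$.
\end{enumerate}
For (1), \cite[Theorem~2.4]{CaNaBe2024} already gives the form \eqref{form of T}. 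The condition $\sigma(R)\subset\mathbb{D}$ comes from the fact that the Harnack part of $S$ meets $\mathbb{T}$ spectrally only through the corner vectors $e_0,e_4$.

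For (2), I would compute $\ker K^2_z(S)$ for $z\in\mathbb{T}$. Since $S$ is nilpotent, $K^2_z(S)=\sum_{k=0}^{4}(\bar z^kS^k+z^kS^{*k})$, a Toeplitz-like matrix with entries $a^{|i-j|}z^{\pm|i-j|}$. Conjugating by $\operatorname{diag}(1,z,\dots,z^4)$ turns it into a constant real matrix. Its kernel is then read off from the eigenvector of the $5\times5$ matrix associated with $\cos(\pi/6)$, i.e.\ the zero eigenvalue of $2I+\operatorname{Re}$-part. This gives the antisymmetric vector $(v_0,v_1,0,-v_1,-v_0)$ with $v_k$ proportional to $\sin$ values. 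Multiplying back by the diagonal gives $v(z)$. Harnack equivalence forces $\ker K^2_z(T)=\ker K^2_z(S)$, and the general proposition asserts that this inclusion, together with the other conditions, is also sufficient.

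For (3), I would compute the compression of $K^2_z(T)$ to $\mathbb{C}e_1\oplus\mathbb{C}e_2\oplus\mathbb{C}e_3$ after eliminating the corner coordinates. This yields the $3\times3$ matrix $M(z)$ in \eqref{p2S5}; the factor $\tfrac14$ arises from the $2\times2$ corner Schur complement with $K$ entries equal to $2$. The general condition that $R$ is singular becomes $\det R=0$.

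The main obstacle is the algebraic verification that the general Schur-complement expression reduces exactly to $I-\tfrac14(A^*A+BB^*)-\operatorname{Re}(\bar zR)$, since for $n=4$ the middle block is $3\times3$ rather than $2\times2$. I would do this by writing $K^2_z(T)$ via $(I-\bar zT)^{-1}=I+\bar zT+\bar z^2T^2+\cdots$ restricted appropriately, using $T e_0=0$ and $T^*e_4=0$, and taking the Schur complement with respect to the $e_0,e_4$ coordinates.

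Finally, I would note that the identity in \eqref{p2S5} should be read as $I_3$, the $3\times3$ identity, as the statement's own remark indicates.
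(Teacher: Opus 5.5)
Your overall route is the paper's: its proof of this corollary only invokes \cite[Proposition~3.4]{CassierBenharrat2020}, with the block form \eqref{form of T} taken from \cite[Theorem~2.4]{CaNaBe2024}. The step you single out as the main obstacle, however, would fail as you plan it, because the Schur complement of $K^2_z(T)$ itself with respect to the $e_0,e_4$ coordinates is not $M(z)$. In $K^2_z(T)$ the corner block is not $2$ times the identity: its off-diagonal entry is $\overline{z}^2A(I-\overline{z}R)^{-1}B$. The middle block is $K^2_z(R)$, and the coupling blocks contain $(I-\overline{z}R)^{-1}$. Already for $T=S$, $z=1$, the $(e_1,e_2)$ entry of that Schur complement is $3a/17$, while the corresponding entry of $M(1)$ is $-a/2$.

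The missing observation is the factorization
\[
K^2_z(T)=(I-\overline{z}T)^{-1}\bigl(2I-\overline{z}T-zT^*\bigr)(I-zT^*)^{-1},
\]
so for $z\in\mathbb{T}$ the kernel is congruent to $2\bigl(I-\operatorname{Re}(\overline{z}T)\bigr)$. In the block form, $I-\operatorname{Re}(\overline{z}T)$ has corner block exactly the $2\times2$ identity and coupling blocks $-\tfrac{\overline{z}}{2}A$ and $-\tfrac{\overline{z}}{2}B$. Its Schur complement is therefore exactly $M(z)$, which is where $\tfrac14=(\tfrac12)^2$ comes from, and no Neumann-series bookkeeping is needed. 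Hence \eqref{p2S5} merely restates $w(T)\le 1$ and is automatic from $T\in C_2(\mathbb{C}^5)$ and (a). The same congruence gives $\dim\ker K^2_z(T)=\dim\ker M(z)$, so (b) forces $\det M(z)=0$ on $\mathbb{T}$. With $M_0:=I-\tfrac14(A^*A+BB^*)$, the polynomial $\det\bigl(zM_0-\tfrac12R-\tfrac{z^2}{2}R^*\bigr)$ equals $z^3\det M(z)$ on $\mathbb{T}$, hence vanishes identically. Its constant term $-\tfrac18\det R$ then gives \eqref{p1S5}.

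The congruence also shows what your sufficiency sentence glosses over. For $S\stackrel{H}{\prec}T$ you need $\ker K^2_z(T)\subseteq\ker K^2_z(S)=\mathbb{C}v(z)$ on $\mathbb{T}$, i.e.\ $\operatorname{rank}M(z)=2$ for every $z\in\mathbb{T}$. Given that, constant rank, compactness of $\mathbb{T}$ and the Poisson representation of these harmonic kernels give \eqref{harnack} in both directions. Condition (b) yields only $\mathbb{C}v(z)\subseteq\ker K^2_z(T)$, hence only $T\stackrel{H}{\prec}S$. Neither $\det R=0$ nor $M(z)\ge 0$ bounds $\dim\ker M(z)$ from above. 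So in specializing the cited proposition you must identify where this rank condition is supplied, rather than treating ``inclusion plus the other conditions'' as sufficient.

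Two smaller corrections. First, $\ker K^2_1(S)=(I-S^*)\ker(I-\operatorname{Re}S)$, so $v$ is $I-S^*$ applied to the sine eigenvector of $\operatorname{Re}S$, giving $(v_0,v_1)\propto(\sqrt3,1)$. Second, $\sigma(R)\subset\mathbb{D}$ comes from the boundedness of $K^2_z(S)$ on $\mathbb{D}$, which rules out unimodular eigenvalues of $T$, together with $\sigma(T)=\{0\}\cup\sigma(R)$. It does not come from anything about the corner vectors.
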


\begin{remark}
	In condition (b) above, the vector $v(z)$ reflects the specific structure of the kernel null space associated with the truncated shift. Its explicit form is dictated by the orthogonal decomposition of $\mathbb{C}^5$ used in \eqref{form of T}.
\end{remark}

The following result translates the positivity condition in Corollary~\ref{Corollary2S5} into explicit algebraic relations. It follows from Theorem~2.4 of \cite{CaNaBe2024}.

\begin{corollary}\label{Corollary2}
	Let $T \in C_2(\mathbb{C}^{5})$. Then the following assertions are equivalent:  
	\begin{enumerate}
		\item[(i)] $T$ belongs to the Harnack part of the $w_2$-normalized truncated shift $S$ in $C_2(\mathbb{C}^{5})$.
		\item[(ii)] The following conditions hold,
		\begin{itemize}
			\item[(a)] The matrix $T$ has the block representation given in \eqref{form of T}, with $\sigma(R)\subset \mathbb{D}$;  
			\item[(b)] For all $z\in \mathbb{T}$, the kernel condition \eqref{KER} holds.
			\item[(c)] The following relations are satisfied:
			\begin{align}
				&\det(R) = 0, \label{p1}\\	
				&\det\left( R^*\vec{i}, R^*\vec{j}, M\vec{k}\right) + \det\left( R^*\vec{i}, M\vec{j}, R^*\vec{k}\right) + \det\left( M\vec{i}, R^*\vec{j}, R^*\vec{k}\right) = 0, \label{p2}\\
				&\det\left( R^*\vec{i}, M\vec{j}, M\vec{k}\right) + \det\left( M\vec{i}, M\vec{j}, R^*\vec{k}\right) + \frac{1}{4}\det\left( R\vec{i}, R^*\vec{j}, R^*\vec{k}\right) \nonumber\\
				&\quad + \frac{1}{4}\det\left( R^*\vec{i}, R\vec{j}, R^*\vec{k}\right) + \frac{1}{4}\det\left( R^*\vec{i}, R^*\vec{j}, R\vec{k}\right) = 0, \label{p3}\\
				&\det(M) + \frac{1}{4}\det\left( R^*\vec{i}, R\vec{j}, M\vec{k}\right) + \frac{1}{4}\det\left( R\vec{i}, R^*\vec{j}, M\vec{k}\right) \nonumber\\
				&\quad + \frac{1}{4}\det\left( M\vec{i}, R^*\vec{j}, R\vec{k}\right) + \frac{1}{4}\det\left(M \vec{i}, R\vec{j}, R^*\vec{k}\right) \nonumber\\
				&\quad + \frac{1}{4}\det\left( R^*\vec{i}, M\vec{j}, R\vec{k}\right) + \frac{1}{4}\det\left( R\vec{i}, M\vec{j}, R^*\vec{k}\right) = 0, \label{p4}\\
				&3 - \frac{1}{4}\left( \| A \|^2 + \| B \|^2 \right) > | \operatorname{Tr}(R)|, \label{p5}
			\end{align}
			where $\vec{i} = (1, 0, 0)^\top$, $\vec{j} = (0, 1, 0)^\top$, $\vec{k} = (0, 0, 1)^\top$, and $M := I_{2} - \frac{1}{4}(A^*A + BB^*)$.
		\end{itemize}
	\end{enumerate}
\end{corollary}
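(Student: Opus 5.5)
The plan is to deduce Corollary~\ref{Corollary2} from Corollary~\ref{Corollary2S5}. Conditions (a) and (b) are the same in both statements, and \eqref{p1} is \eqref{p1S5}. So the whole task is to show that, when $\det R=0$, the operator inequality \eqref{p2S5} for $M(z)=M-\operatorname{Re}(\overline{z}R)$ is equivalent to the identities \eqref{p2}--\eqref{p4} together with the strict inequality \eqref{p5}.

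\textbf{Step 1: expand $\det M(z)$.} For $z=e^{it}\in\mathbb{T}$ write
\[
M(z)=M-\tfrac12\overline{z}R-\tfrac12 zR^*.
\]
Use multilinearity of the determinant in the columns: the $\ell$-th column of $M(z)$ is $M e_\ell-\tfrac12\overline z Re_\ell-\tfrac12 zR^*e_\ell$. Expanding gives a trigonometric polynomial
\[
\det M(z)=\sum_{k=-3}^{3}c_k z^k .
\]
Collect the terms by powers of $z$.
\begin{itemize}
\item The coefficient of $z^{3}$ is $-\tfrac18\det R^*$, which vanishes by \eqref{p1}.
\item The coefficient of $z^{2}$ is $\tfrac14$ times the left side of \eqref{p2}.
\item The coefficient of $z$ is, up to the factor $-\tfrac12$, the left side of \eqref{p3}. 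It comes from choosing one $R^*$ column and two $M$ columns, or two $R^*$ columns and one $R$ column with the $\tfrac14$ weights.
\item The constant term is the left side of \eqref{p4}.
\end{itemize}
Since $M(z)$ is Hermitian, $c_{-k}=\overline{c_k}$.

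\textbf{Step 2: use the kernel vector to force $\det M(z)\equiv 0$.} Condition (b) gives a nonzero null vector of $K_z^2(T)$ for each $z\in\mathbb{T}$. Following the reduction behind \cite[Theorem~2.4]{CaNaBe2024} (a Schur complement of the kernel relative to the decomposition in \eqref{form of T}), this produces a nonzero vector in $\ker M(z)$ for all $z\in\mathbb{T}$. Hence $\det M(z)=0$ on $\mathbb{T}$, so all $c_k$ vanish, which is exactly \eqref{p2}--\eqref{p4}.

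Conversely, if these identities hold, then $M(z)$ is singular for every $z\in\mathbb{T}$. Positivity of a $3\times3$ Hermitian matrix with vanishing determinant is then equivalent to its $2\times2$ principal minors being nonnegative and its trace being positive.

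\textbf{Step 3: reduce positivity to the trace condition \eqref{p5}.} We have
\[
\operatorname{Tr} M(z)=3-\tfrac14(\|A\|^2+\|B\|^2)-\operatorname{Re}(\overline{z}\operatorname{Tr}R).
\]
Its minimum over $\mathbb{T}$ is positive iff \eqref{p5} holds. The main obstacle is controlling the second symmetric function $e_2(M(z))$, i.e.\ the sum of the $2\times2$ principal minors. My first attempt is to argue by continuity in $z$ and connectedness of $\mathbb{T}$. Since $\det M(z)\equiv0$, the eigenvalues of $M(z)$ are $0,\lambda_1(z),\lambda_2(z)$ with $\lambda_1\lambda_2=e_2$. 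The following chain should give nonnegativity throughout:
\begin{itemize}
\item At a suitable point the matrix $M(z)$ is positive semidefinite. This is guaranteed for $T\in C_2$ by the kernel positivity, from which the Schur complement inherits semidefiniteness.
\item $\sigma(R)\subset\mathbb{D}$ together with the kernel structure gives $\dim\ker M(z)=1$ on $\mathbb{T}$.
\item Hence $\lambda_1,\lambda_2$ never cross zero.
\item The positive trace then forces both to be positive.
\end{itemize}
In the converse direction, (a) and (b) with these relations rebuild $K^2_z(T)\ge0$ via the Schur complement, so $T$ is in the Harnack part. Strictness in \eqref{p5} encodes the required one-dimensional kernel, matching $\dim\ker K_z^2(S)=1$.
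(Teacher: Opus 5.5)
Your Steps 1 and 2 are the right mechanism, and they are presumably what the paper's one-line appeal to \cite[Theorem~2.4]{CaNaBe2024} packages: \eqref{p2}--\eqref{p4} are the Fourier coefficients of $\det M(z)$, and (b) forces $\det M(z)\equiv 0$ on $\mathbb{T}$. Two points need to be made precise.

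First, the reduction in Step 2 is not a Schur complement of the kernel itself. It is the congruence $K_z^2(T)=2(I-\overline zT)^{-1}\bigl(I-\operatorname{Re}(\overline zT)\bigr)(I-zT^*)^{-1}$, valid on $\mathbb{T}$ because $\sigma(T)=\sigma(R)\cup\{0\}\subset\mathbb{D}$. One then takes the Schur complement of $I-\operatorname{Re}(\overline zT)$ with respect to $\mathbb{C}e_0\oplus\mathbb{C}e_4$, which is exactly $M(z)$. This gives $\dim\ker K_z^2(T)=\dim\ker M(z)$.

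Second, in Step 1 the coefficient of $z$ contains three terms with one $R^*$ column and two $M$ columns. The printed \eqref{p3} omits $\det(M\vec i,R^*\vec j,M\vec k)$, so your identification holds only for the corrected relation, and you should say so.

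The genuine gap is in Step 3. Nothing in (ii) gives $\dim\ker M(z)=1$. Condition (b) yields only $\dim\ker M(z)\ge 1$, and $\sigma(R)\subset\mathbb{D}$ says nothing about the kernel of $M(z)$. Strictness in \eqref{p5} does not encode a one-dimensional kernel either: $\operatorname{Tr}M(z)>0$ excludes only $M(z)=0$, and a rank-one positive semidefinite $M(z)$ has positive trace. So your continuity argument for the converse rests on an unproved premise.

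That argument is also unnecessary. Since $T\in C_2(\mathbb{C}^5)$ is a standing hypothesis, $I-\operatorname{Re}(\overline zT)\ge 0$ for every $z\in\mathbb{T}$. Hence its Schur complement satisfies $M(z)\ge 0$ at every point, not just at a suitable one. Thus (ii)$\Rightarrow$(i) is simply Corollary~\ref{Corollary2S5} applied with (a), (b), \eqref{p1} and this automatic positivity; \eqref{p2}--\eqref{p5} are not used.

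The real work is in the forward direction, for the \emph{strict} inequality \eqref{p5}, which your sketch does not establish: $M(z)\ge 0$ gives only $\operatorname{Tr}M(z)\ge 0$. The missing step goes as follows.
\begin{itemize}
\item The two-sided inequalities $K_z^2(T)\le c^2K_z^2(S)$ and $K_z^2(S)\le c^2K_z^2(T)$ extend by continuity to $\mathbb{T}$, since both spectra lie in $\mathbb{D}$.
\item This forces $\ker K_z^2(T)=\ker K_z^2(S)$, which is a line.
\item Hence $\dim\ker M(z)=1$, so $M(z)\neq 0$ and $\operatorname{Tr}M(z)>0$ on $\mathbb{T}$, which is \eqref{p5}.
\end{itemize}
If you want a converse that does not go through Corollary~\ref{Corollary2S5}, you would need a genuine rank-two condition on $M(z)$, for instance positivity of the sum of its $2\times 2$ principal minors on $\mathbb{T}$. \eqref{p5} does not provide this.
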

We begin by considering the case where $0$ is an eigenvalue of the block $R$ with multiplicity at least $2$.
\begin{proposition}\label{Harnack_part_S_5} 
	Assume that the spectrum of $R$ is $\sigma(R)=\{ 0, \lambda \}$, with $\lambda \in \mathbb{D}$, and that the eigenvalue $0$ has algebraic multiplicity at least $2$. If $T\in C_2(\mathbb{C}^5)$ lies in the Harnack part of $S$, then $0$ must have multiplicity $3$ (i.e., $\lambda=0$), and $T$ takes one of the following forms,
	\begin{equation}\label{T case12}
		T=\frac{2}{\sqrt{3}} \begin{bmatrix}
			0 & 1 & 0 & 0 & 0 \\
			0 & 0 & e^{i\theta} & 0 & 0 \\
			0 & 0 & 0 & e^{-i\theta} & 0 \\
			0 & 0 & 0 & 0 & 1 \\
			0 & 0 & 0 & 0 & 0
		\end{bmatrix},\quad \text{or} \quad 
		T=  \begin{bmatrix}
			0 & -\sqrt{3} & 0 & 0 & 0 \\
			0 & 0 & \frac{1}{\sqrt{2}}e^{i\theta} & 0 & 0 \\
			0 & 0 & 0 & \frac{1}{\sqrt{2}}e^{-i\theta} & 0 \\
			0 & 0 & 0 & 0 & -\sqrt{3} \\
			0 & 0 & 0 & 0 & 0
		\end{bmatrix},
	\end{equation} 
	with $\theta \in \mathbb{R}$.	
\end{proposition}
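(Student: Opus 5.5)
We will work with the characterization in Corollary~\ref{Corollary2S5}: block form \eqref{form of T}, the kernel identity \eqref{KER} on $\mathbb{T}$, and the conditions \eqref{p1S5}--\eqref{p2S5}. The plan has three steps. First, extract explicit polynomial identities in $z$ from \eqref{KER}. Second, use the spectral hypothesis to put $R$ into a normal form. Third, solve the resulting finite system.

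For the first step, we multiply \eqref{KER} by $(I-\overline{z}T)(I-zT^*)$. Since $v$ is fixed, this turns \eqref{KER} into the equivalent identity
\[
(I-zT^*)v(z)+(I-\overline{z}T)v(z)=0 \quad\text{after applying } (I-\overline z T)^{-1}(I-zT^*)^{-1}\text{-type manipulations};
\]
more concretely, we will use $(I-\overline{z}T)^{-1}+(I-zT^*)^{-1}=(I-\overline z T)^{-1}\bigl(2I-\overline zT-zT^*\bigr)(I-zT^*)^{-1}$. Then \eqref{KER} reads $(2I-\overline zT-zT^*)w(z)=0$ with $w(z)=(I-zT^*)^{-1}v(z)$. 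Because $T$ is nilpotent along the flag $e_0\to(e_1,e_2,e_3)\to e_4$ up to the block $R$, $w(z)$ is a rational vector in $z$. We then expand in powers of $z$ on $\mathbb{T}$ (using $\overline z=z^{-1}$) and equate coefficients. This produces linear relations among $A$, $B$, $R$, $v_0$ and $v_1$. Typically these say that $R$ maps $e_2$-type vectors in a controlled way, that $A^*$ and $B$ are proportional to the $e_1$ and $e_3$ directions, and that there are norm identities such as $|a_1|$, $|b_3|$ related to $v_0/v_1$.

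For the second step, we use the hypothesis that $0$ has algebraic multiplicity at least $2$. Together with $\det R=0$, this means the characteristic polynomial of $R$ is $x^2(x-\lambda)$, so $\operatorname{Tr}R=\lambda$ and the sum of the principal $2\times2$ minors of $R$ is $0$. We substitute these into the algebraic relations \eqref{p2}--\eqref{p4} of Corollary~\ref{Corollary2}, with $M=I-\frac14(A^*A+BB^*)$. After the first step, $M$ is essentially diagonal. The aim is to show that \eqref{p2}--\eqref{p4} together with the kernel relations force $\lambda=0$. The mechanism would be a coefficient of the symbol $M(z)$ that is a trigonometric polynomial whose minimum must be $0$ (via $\det M(z)\equiv 0$ on $\mathbb{T}$, which is what \eqref{p2}--\eqref{p4} encode). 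The trace term $\lambda$ contributes to the degree-one Fourier coefficient, and it must vanish once the nilpotent part is identified. Then $R$ is nilpotent, and the kernel relations should force $R$ to be strictly upper triangular in $e_1,e_2,e_3$ with only the entries $r_{12}$ and $r_{23}$ nonzero (the analogue of the chain structure of $S$).

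For the third step, we reduce by a diagonal unitary conjugation, as in \eqref{S3}, to $a_1,b_3$ real and the remaining phases to $e^{i\theta}$ and $e^{-i\theta}$; the product of phases along the chain is constrained by the kernel relation, which ties the two phases together. The remaining unknowns are the moduli $|a_1|=|b_3|=\alpha$ and $|r_{12}|=|r_{23}|=\beta$; the symmetry comes from the antisymmetric form of $v(z)$. Equation \eqref{p4} (equivalently $\det M(z)\equiv0$) then gives a polynomial system in $\alpha^2$ and $\beta^2$. We expect two admissible solutions, $(\alpha,\beta)=(2/\sqrt3,2/\sqrt3)$ and $(\sqrt3,1/\sqrt2)$. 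The sign $-\sqrt3$ in the second form is fixed by the kernel vector, because $v_0$ and $v_1$ must have matching signs relative to $T$. Finally, we check \eqref{p5} and $\sigma(R)\subset\mathbb{D}$ for both.

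The main obstacle is the second step: showing $\lambda=0$ and killing the off-chain entries of $R$ from the messy determinant identities \eqref{p2}--\eqref{p4}. We would first try to use the kernel coefficients to pin down the first and third columns and rows of $R$, so that these identities collapse to scalar equations in $\lambda$ and a few entries.
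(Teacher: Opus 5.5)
\textbf{Verdict: there is a genuine gap.} You flag Step 2 as the main obstacle, and nothing in Steps 2--3 actually proves $\lambda=0$ or determines $R$.

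\textbf{The mechanism in Step 2 does not work.} Conditions \eqref{p1}--\eqref{p4} are the Fourier coefficients of $\det M(z)\equiv0$ on $\mathbb{T}$. Even granting $M=\operatorname{diag}(m_1,m_2,m_3)$, the coefficient of $z$ in $\det M(z)$ is $-\tfrac12(\overline{r_{11}}m_2m_3+\overline{r_{22}}m_1m_3+\overline{r_{33}}m_1m_2)$ plus a term cubic in $R,R^*$. Since $M$ is not scalar (already for $S$, $M=\operatorname{diag}(2/3,1,2/3)$), this is not a multiple of $\operatorname{Tr}R=\lambda$. The claim that it ``must vanish once the nilpotent part is identified'' is circular. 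So is ``$M$ is essentially diagonal after Step 1'': it presupposes $a_2=a_3=b_1=b_2=0$, which is the heart of the matter. The paper only obtains these after $\lambda=0$, from the $z^2$ and $z^4$ coefficients of \eqref{systm:eq2}.

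\textbf{The determinant identities cannot select the answer.} $\det M(z)\equiv0$ forgets which vector spans the kernel, and the rigidity comes from the prescribed kernel vector $v(z)$. Take weighted shifts with weights $(\alpha,\beta e^{i\theta},\beta e^{-i\theta},\alpha)$, $\alpha$ real. Then \eqref{p1}--\eqref{p3} hold trivially, and \eqref{p4} reduces to $(1-\tfrac{\alpha^2}{4})(1-\tfrac{\alpha^2}{4}-\tfrac{\beta^2}{2})=0$. That is a whole curve, containing both answers, so \eqref{p4} cannot produce two isolated solutions in Step 3. The second equation, $v_0\alpha^2+v_1\alpha-2v_0=0$, comes from condition (b); in the paper it is the $z^2$-coefficient of \eqref{systm:eq1}.

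\textbf{The normalization in Step 3 is not allowed.} You cannot make $a_1,b_3$ real by a diagonal unitary conjugation. Such a conjugation stays in the Harnack part of $S$ only if it fixes $v(z)$, i.e.\ has equal phases at positions $0,1,3,4$, and that only produces the parameter $\theta$. The reality of $a_1$, the relations $a_1=b_3$ and $r_{23}=\overline{r_{12}}$, and the sign $-\sqrt3$ must all be derived from (b). The paper works entirely through (b): it uses $R^3=\lambda R^2$ to write the resolvent, clears denominators in $K_z^2(T)v(z)=0$, and compares coefficients. It uses \eqref{p1}--\eqref{p5} only to verify the final matrices.

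\textbf{How to close the gap: push your Step 1 through.} That reformulation is the right tool, and you dropped it too early. Carried out properly, it even makes the multiplicity hypothesis unnecessary. It also sidesteps the $\lambda\neq0$ discussion, where the paper's own argument needs repair: once $a_1,b_3$ are real, $\operatorname{Re}a_1=\operatorname{Re}b_3$ gives $a_1=b_3$, so the sum-of-roots step there does not follow as written. The steps are these.
\begin{enumerate}
\item Since $\sigma(T)\subset\mathbb{D}$, $w(z)=(I-zT^*)^{-1}v(z)=\sum_{m\ge0}z^mw_m$ is analytic on a disc of radius $>1$. From $(2I-\overline{z}T-zT^*)w=0$ and $(I-zT^*)w=v$ you get $Tw(z)=z\,(w(z)+v(z))$ on $\mathbb{T}$, hence identically.
\item With $v(z)=\sum_k\nu_kz^ke_k$, this reads $T^*w_{m-1}=w_m-\nu_me_m$ and $Tw_m=w_{m-1}+\nu_{m-1}e_{m-1}$.
\item For $m\ge5$ these give $TT^*w_m=w_m$. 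So $T^*$ is isometric, hence unitary, on the $T^*$-invariant span of $\{w_m\}_{m\ge5}$. Since $r(T)<1$, that span is $\{0\}$, so $w_4=v_0e_4$ and $w$ is a polynomial.
\item Let $x=v_0A^*+v_1\mathbf{e}_1$ and $y=v_0B+v_1\mathbf{e}_3$ be the middle blocks of $w_1,w_3$. The remaining coefficient identities are $Ax=2v_0$, $Rx=0$, $B^*y=2v_0$, $R^*y=0$, $B^*x=Ay=0$, $u:=R^*x=Ry$, $Au=B^*u=0$, $Ru=x+v_1\mathbf{e}_1$, $R^*u=y+v_1\mathbf{e}_3$.
\item The pairing $\langle Rx,u\rangle=\langle x,R^*u\rangle$, together with $B^*x=Ay=0$, gives $a_3=b_1=0$.
\item Next, $\langle u,x\rangle=\langle u,y\rangle=0$ with $Au=B^*u=0$ gives $u\in\mathbb{C}\mathbf{e}_2$, and $u\neq0$ because $v_1/v_0=1/\sqrt3$. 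Hence $a_2=b_2=0$.
\item Then $R$ has only the entries $r_{12},r_{23}$, so $T$ is nilpotent, and the scalar equations give exactly the two forms.
\end{enumerate}
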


\begin{proof}
	Let $T \in C_2(\mathbb{C}^5)$ belong to the Harnack part of $S$. By Corollary~\ref{Corollary2}, $T$ admits the block form \eqref{form of T} with $\sigma(R)\subset \mathbb{D}$ and $\det(R)=0$. Proposition~2.3 of \cite{CaNaBe2024} yields $\sigma(T)= \sigma(R) \cup \{ 0 \}$; hence, by \cite[Corollary~2.2]{CaBeBel2018}, we must have $\sigma(R)=\{ 0, \lambda \}$ for some $\lambda \in \mathbb{D}$. 
	
	Since $0$ has multiplicity at least $2$, the minimal polynomial of $R$ divides $t^2(t-\lambda)$, which implies $R^3=\lambda R^2$. Consequently, the resolvent expands as
	\[
	\mathbf{R}(\overline{z}, \lambda) = (I_2-\overline{z}R)^{-1} = I_2 + \overline{z} R + \frac{\overline{z}^2}{1-\lambda \overline{z}}R^2 \qquad \text{for all } z\in \overline{\mathbb{D}},
	\]
	and $K_z^2(R) = \mathbf{R}^*(z, \overline{\lambda}) + \mathbf{R}(\overline{z}, \lambda)$.
	
	Thus, for every $z\in \mathbb{T}$, the $2$-kernel of $T$ is given by
	\[
	K_z^2(T) = \begin{bmatrix}
		2 & \overline{z} A \mathbf{R}(\overline{z}, \lambda) & \overline{z}^2 A \mathbf{R}(\overline{z}, \lambda)B\\
		z \mathbf{R}^*(z, \overline{\lambda})A^* & K_z^2(R) & \overline{z}\mathbf{R}(\overline{z}, \lambda) B\\
		z^2 B^* \mathbf{R}^*(z, \overline{\lambda})A^* & z B^* \mathbf{R}^*(z, \overline{\lambda}) & 2
	\end{bmatrix}.
	\] 
	Invoking condition \eqref{KER}, we have for all $z\in \mathbb{T}$,
	\begin{equation}\label{system1} 
		K_{z}^{2}(T) \begin{bmatrix} v_0 \\ z w(z) \\ -v_0 z^4 \end{bmatrix} = 0,
	\end{equation}
	where $w(z) = v_1(1, 0, -z^2)^\top$ and $v_k \neq 0$ for $k=0,1$. Equation \eqref{system1} is equivalent to the system
	\begin{equation}\label{system01} 
		\begin{cases}
			2v_0 + A \mathbf{R}(\overline{z}, \lambda) w(z) - z^2 v_0 A \mathbf{R}(\overline{z}, \lambda) B = 0,\\
			v_0 \mathbf{R}^*(z, \overline{\lambda})A^* + K_z^2(R)w(z) - v_0 z^2 \mathbf{R}(\overline{z}, \lambda) B = 0,\\
			v_0 B^*\mathbf{R}^*(z, \overline{\lambda})A^* + B^*\mathbf{R}^*(z, \overline{\lambda})w(z) - 2v_0 z^2 = 0.
		\end{cases} 
	\end{equation}
	The third equation in \eqref{system01} can be rewritten as
	\[
	v_0 z^2 A \mathbf{R}(z, \overline{\lambda}) B + z^2 w^*(z) \mathbf{R}(z, \overline{\lambda}) B - 2v_0 = 0.
	\]
	Adding this to the first equation of \eqref{system01} yields
	\begin{equation}\label{eqfour}
		A \mathbf{R}(\overline{z}, \lambda) w(z) = -z^2 w^*(z) \mathbf{R}(z, \overline{\lambda}) B,
	\end{equation}
	for all $z\in \mathbb{T}$. Hence, \eqref{system01} and \eqref{eqfour} are equivalent to the following polynomial identities:
	\begin{align}
		2v_0 z(z-\lambda) &+ v_1 A \Bigl(  z(z-\lambda)I_2 + (z-\lambda)R + R^2 \Bigr) 
		\begin{bmatrix} 1\\0\\-z^2 \end{bmatrix} \nonumber \\
		&\qquad  - v_0 z^2 A \Bigl( z(z-\lambda)I_2 + (z-\lambda)R + R^2 \Bigr) B = 0, \label{systm:eq1}	
	\end{align}
	\begin{align}
		v_0 &z(z-\lambda)  \Bigl( (1-\overline{\lambda}z)I_2 + z(1-\overline{\lambda}z)R^* + z^2 R^{*2} \Bigr) A^* \nonumber \\ 
		& + v_1 z(z-\lambda) \Bigl( (1-\overline{\lambda}z)I_2 + z(1-\overline{\lambda}z)R^* + z^2 R^{*2} \Bigr) \begin{bmatrix} 1\\0\\-z^2 \end{bmatrix} \nonumber \\
		& + v_1 (1-\overline{\lambda}z) \Bigl( z(z-\lambda)I_2 + (z-\lambda)R + R^{2} \Bigr) \begin{bmatrix} 1\\0\\-z^2 \end{bmatrix} \label{systm:eq2} \\
		& \qquad \qquad - v_0 z^2 (1-\overline{\lambda}z) \Bigl( z(z-\lambda)I_2 + (z-\lambda)R + R^2 \Bigr)B = 0,\nonumber 	
	\end{align}
	\begin{align}
		v_0& B^*  \Bigl( (1-\overline{\lambda}z)I_2 + z(1-\overline{\lambda}z)R^* + z^2 R^{*2} \Bigr) A^* \nonumber \\
		& + v_1 B^* \Bigl( (1-\overline{\lambda}z)I_2 + z(1-\overline{\lambda}z)R^* + z^2 R^{*2} \Bigr) \begin{bmatrix} 1\\0\\-z^2 \end{bmatrix} - 2v_0 z^2 (1-\overline{\lambda}z) = 0, \label{systm:eq3}	
	\end{align}
	and
	\begin{align}
		& A \left( z(z-\lambda)I_2 + (z-\lambda)R + R^2 \right) \begin{bmatrix} 1\\0\\-z^2 \end{bmatrix} \nonumber \\ 
		& \quad  \qquad = \begin{bmatrix} -z^2 & 0 & 1 \end{bmatrix} \left( z(z-\lambda)I_2 + (z-\lambda)R + R^2 \right) B. \label{systm:eq4} 
	\end{align}
	By analytic continuation, equations \eqref{systm:eq1}--\eqref{systm:eq4} hold for all $z\in \mathbb{C}$. In the sequel, we denote by $\mathbf{e}_1$, $\mathbf{e}_2$, and $\mathbf{e}_3$ the vectors of the canonical basis of $\mathbb{C}^3$. Comparing the coefficients of $z^k$ for $k=0,1,2,3,4$ in \eqref{systm:eq4} yields, respectively,
	\begin{equation}\label{eq4:1}
		A(-\lambda R + R^2) \mathbf{e}_1 = \mathbf{e}_3^\top (-\lambda R + R^2) B, 
	\end{equation} 
	\begin{equation}\label{eq4:2}
		A(-\lambda I_2 + R) \mathbf{e}_1 = \mathbf{e}_3^\top (-\lambda I_2 + R) B, 
	\end{equation} 
	\begin{align}
		& A \mathbf{e}_1 + A(-\lambda R + R^2) (-\mathbf{e}_3) \nonumber\\ 
		& \quad - \mathbf{e}_3^\top B + \mathbf{e}_1^\top (-\lambda R + R^2) B = 0, \label{eq4:3}
	\end{align}
	\begin{equation}\label{eq4:4} 
		A(-\lambda I_2 + R) \mathbf{e}_3 = \mathbf{e}_1^\top (-\lambda I_2 + R) B, 
	\end{equation}
	and
	\begin{equation}\label{eq4:5}  
		A \mathbf{e}_3 = \mathbf{e}_1^\top B. 
	\end{equation} 
	From \eqref{eq4:5}, we immediately obtain
	\begin{equation}\label{eq1}
		a_3 = b_1.	
	\end{equation} 
	Combining \eqref{eq4:5} and \eqref{eq4:4} gives
	\begin{equation}\label{eq4:7}
		A R \mathbf{e}_3 = \mathbf{e}_1^\top R B. 
	\end{equation}
	Thus, \eqref{eq4:3} simplifies to
	\begin{equation}
		A \mathbf{e}_1 - A R^2 \mathbf{e}_3 - \mathbf{e}_3^\top B + \mathbf{e}_1^\top R^2 B = 0, \label{eq4:6}
	\end{equation}  
	which is equivalent to
	\begin{equation}\label{eq4:6bis}
		a_1 - b_3 = A R^2 \mathbf{e}_3 - \mathbf{e}_1^\top R^2 B.
	\end{equation}
	Next, extracting the coefficient of $z$ in \eqref{systm:eq2} yields
	\[
	-\lambda v_0 A^* + v_1 (-2\lambda I_2 + R) \mathbf{e}_1 = 0,
	\]
	and consequently
	\begin{equation}\label{eq2:z}
		v_1 R \mathbf{e}_1 = \lambda v_0 A^* + 2\lambda v_1 \mathbf{e}_1. 
	\end{equation}
	Similarly, the coefficient of $z^5$ in \eqref{systm:eq2} leads to
	\[
	v_1 (-2\overline{\lambda} I_2 + R^*) \mathbf{e}_3-v_0 \overline{\lambda} B = 0,
	\]
	whence
	\begin{equation}\label{eq2:z5}
		v_1 R^* \mathbf{e}_3 = 2\overline{\lambda} v_1 \mathbf{e}_3 + v_0 \overline{\lambda} B. 
	\end{equation} 
	Taking the coefficients of $z^2$ and $z^4$ in \eqref{systm:eq2}, we obtain
	\begin{align}
		v_0 \Bigl( (1+|\lambda|^2)I_2 - \lambda R^* \Bigr)  A^* & + v_1 \Bigl( 2(1+|\lambda|^2)I_2 - \lambda R^* - \overline{\lambda}R \Bigr)  \mathbf{e}_1 \nonumber \\ 
		& - v_1 \left(  -\lambda R + R^2 \right)  \mathbf{e}_3 - v_0 \left(  -\lambda R + R^2 \right)    B = 0, \label{eq2:z2}
	\end{align}
	and
	\begin{align}
		v_0 \left(  -\overline{\lambda} R^* + R^{*2} \right)   A^*  + v_1 \bigl( -\overline{\lambda} R^* + R^{*2} \bigr)  \mathbf{e}_1& - v_1  \Bigl( 2(1+|\lambda|^2)I_2 - \lambda R^* - \overline{\lambda}R \Bigr)  \mathbf{e}_3 \nonumber \\
		& - v_0  \Bigl((1+|\lambda|^2)I_2 - \overline{\lambda} R \Bigr)  B = 0. \label{eq2:z4}
	\end{align}
	Taking the inner product of \eqref{eq2:z2} with $\mathbf{e}_1$ gives
	\begin{align}
		v_0 & \left\langle \Bigl( (1+|\lambda|^2)I_2 - \lambda R^* \Bigr)  A^*, \mathbf{e}_1 \right\rangle + v_1 \left\langle \Bigl( 2(1+|\lambda|^2)I_2 - \lambda R^* - \overline{\lambda}R \Bigr)  \mathbf{e}_1, \mathbf{e}_1 \right\rangle \nonumber \\ 
		&\qquad \qquad \qquad \qquad - v_1 \left\langle\left(  -\lambda R + R^2 \right) \mathbf{e}_3, \mathbf{e}_1 \right\rangle - v_0 \left\langle \left(  -\lambda R + R^2 \right) B, \mathbf{e}_1 \right\rangle = 0. \label{eq2:z2x100}
	\end{align}
	Using \eqref{eq4:2}, this simplifies to
	\begin{align}
		v_0 \overline{a}_1  - \lambda &\left\langle \left(  -\overline{\lambda}I_2 + R^* \right)  A^*, \mathbf{e}_1 \right\rangle + 2v_1(1+|\lambda|^2) + v_1 \left\langle \left(  -\lambda R^* - \overline{\lambda}R \right)  \mathbf{e}_1, \mathbf{e}_1 \right\rangle \nonumber \\ 
		&  \qquad \qquad -v_1 \left\langle \left(  -\lambda R + R^2 \right)  \mathbf{e}_3, \mathbf{e}_1 \right\rangle - v_0 \left\langle \left(  -\lambda R + R^2 \right)  B, \mathbf{e}_1 \right\rangle = 0. \label{eq2:z2x1002}
	\end{align}
	From \eqref{eq2:z}, we compute
	\begin{align}
		v_1 \left\langle \left(  -\lambda R^* - \overline{\lambda}R \right)  \mathbf{e}_1, \mathbf{e}_1 \right\rangle 
		&= -\lambda \left\langle \mathbf{e}_1, v_1 R \mathbf{e}_1 \right\rangle - \overline{\lambda} \left\langle v_1 R \mathbf{e}_1, \mathbf{e}_1 \right\rangle \nonumber\\
		&= -|\lambda|^2 v_0 a_1 - |\lambda|^2 v_0 \overline{a}_1 - 4|\lambda|^2 v_1. \label{eq2:R}
	\end{align}
	Taking the adjoint of \eqref{eq2:z4} yields
	\begin{align}
		v_0 A \left(  -\overline{\lambda} R + R^2 \right)   + v_1 \mathbf{e}_1^\top \left(  -\lambda R + R^{2} \right) & - v_1 \mathbf{e}_3^\top \Bigl( 2(1+|\lambda|^2)I_2 - \lambda R^* - \overline{\lambda}R \Bigr)  \nonumber \\
		& - v_0 B^* \Bigl( (1+|\lambda|^2)I_2 - \lambda R^* \Bigr)  = 0. \label{eq2:z4*}
	\end{align}
	Applying this to $\mathbf{e}_3$ gives
	\begin{align}
		v_0 A \left(-\overline{\lambda} R + R^2 \right)  \mathbf{e}_3 & + v_1 \mathbf{e}_1^\top \left(-\lambda R + R^{2} \right) \mathbf{e}_3  - 2v_1 (1+|\lambda|^2)  \nonumber\\
		& + v_1 \mathbf{e}_3^\top \left(  -\lambda R^* - \overline{\lambda}R\right)  \mathbf{e}_3- v_0 \overline{b}_3 - \lambda B^*\left(-\overline{\lambda}I_2 + R^* \right)  \mathbf{e}_3 = 0. \label{eq2:z4*0012}
	\end{align}
	Using \eqref{eq2:z5}, we find
	\begin{align}
		v_1 \mathbf{e}_3^\top \left(-\lambda R^* - \overline{\lambda}R\right)  \mathbf{e}_3 
		&= \lambda \left\langle v_1 R^* \mathbf{e}_3, \mathbf{e}_3 \right\rangle + \overline{\lambda} \left\langle \mathbf{e}_3, v_1 R^* \mathbf{e}_3 \right\rangle \nonumber\\
		&= |\lambda|^2 v_0 b_3 + |\lambda|^2 v_0 \overline{b}_3 + 4|\lambda|^2 v_1. \label{eq2:R*}
	\end{align}
	Adding \eqref{eq2:z2x1002} to \eqref{eq2:z4*0012}, and using \eqref{eq4:2}, \eqref{eq4:6bis}, \eqref{eq2:R}, and \eqref{eq2:R*}, we obtain
	\begin{equation}\label{key_real}
		(1-|\lambda|^2)\bigl(\operatorname{Re}(a_1) - \operatorname{Re}(b_3)\bigr) = 0.	
	\end{equation}
	Since $|\lambda| < 1$, it follows that
	\begin{equation}\label{key_equal}
		\operatorname{Re}(a_1) = \operatorname{Re}(b_3).	
	\end{equation}
	On the other hand, we have 
	\begin{equation}\label{eq_tr}
		v_1 \lambda = v_1 \operatorname{Tr}(R)
		= \left\langle v_1 R \mathbf{e}_1, \mathbf{e}_1 \right\rangle + \left\langle \mathbf{e}_3, v_1 R^* \mathbf{e}_3 \right\rangle+v_1\left\langle R  \mathbf{e}_2, \mathbf{e}_2 \right\rangle. 
	\end{equation}
	Substituting \eqref{eq2:z} and \eqref{eq2:z5} into \eqref{eq_tr} yields
	\begin{align*}
		v_1 \lambda 
		&= \left\langle \lambda v_0 A^* + 2\lambda v_1 \mathbf{e}_1, \mathbf{e}_1 \right\rangle + \left\langle \mathbf{e}_3, 2\overline{\lambda} v_1 \mathbf{e}_3 + v_0 \overline{\lambda} B \right\rangle +v_1r_{22}\\
		&= 2\lambda v_1 + v_0 \lambda \overline{a}_1 + 2\lambda v_1 + v_0 \lambda \overline{b}_3+v_1r_{22}.
	\end{align*}
	We deduce that 
	\begin{equation}\label{E1}
		\lambda \left( 3v_1 + v_0(\overline{a}_1+ \overline{b}_3) \right)+v_1r_{22} = 0.
	\end{equation}
On the other hand,  \eqref{eq2:z}  gives
	\begin{align*}
		0 &= -\lambda v_0 A A^* + v_1 A (R - \lambda I_2) \mathbf{e}_1 - \lambda v_1 A \mathbf{e}_1 \\
		&= -\lambda v_0 A A^* + 2\lambda v_0 - \lambda v_1 a_1. 
	\end{align*}
	Similarly,  \eqref{eq2:z5} yields 
	\begin{align*}
		0 &= v_1\mathbf{e}_3^\top  (R - 2\lambda I_2)B + v_0 \lambda B^* B \\
		&= v_1\mathbf{e}_3^\top  (R - \lambda I_2)B+v_1\mathbf{e}_3^\top B + v_0 \lambda B^* B \\
		&= -2\lambda v_0 + v_1 \lambda b_3 + v_0 \lambda B^* B.
	\end{align*} 
	Assume now that $\lambda \neq 0$. Then $v_0 |a_1|^2 + v_1 a_1 + v_0 |a_2|^2 + v_0 |a_3|^2 - 2v_0 = 0$, which shows that $a_1$ must be a real root of this quadratic equation. Simultaneously, we have $\lambda (v_0 B^* B + v_1 b_3 - 2v_0) = 0$, so $b_3$ is also real. Hence, by \eqref{key_equal}, $a_1$ and $b_3$ are two real solutions of
	\[
	v_0 x^2 + v_1 x + v_0 (|a_2|^2 + |a_3|^2 - 2) = 0.
	\]
Thus , $a_1 + b_3 = -v_1/v_0$. Substituting this into \eqref{E1} gives $r_{22} =-2\lambda$. From \eqref{eq2:z} we obtain $r_{11}=0$, and from \eqref{eq2:z5}, $r_{33} = 0$. Thus $\lambda =r_{22} =-2\lambda$, contradicting $\lambda\neq 0$. Therefore, we must have $\lambda = 0$.
	
	With $\lambda=0$, equations \eqref{eq2:z} and \eqref{eq2:z5} imply $r_{11} = r_{21} = r_{31} = 0$ and $r_{31} = r_{32} = r_{33} = 0$. Moreover, \eqref{eq_tr} yields $r_{22} = 0$.
	Extracting the coefficient of $z^2$ in \eqref{systm:eq2} gives $a_2 = a_3 = 0$ and
	\begin{equation}\label{eq:rr}
		v_0 \overline{a}_1 + 2v_1 - v_1 r_{12} r_{23} - v_0 r_{12} r_{23} b_3 = 0.
	\end{equation}
	Similarly, the coefficient of $z^4$ in \eqref{systm:eq2} yields $b_1 = b_2 = 0$ and
	\begin{equation}\label{eq:rr1}
		v_0 r_{12} r_{23} a_1 - 2v_1 + v_1 r_{12} r_{23} - v_0 \overline{b}_3 = 0.
	\end{equation}
	The coefficient of $z^3$ in \eqref{systm:eq2} leads to
	\begin{equation} 
		\begin{cases}
			r_{13}(v_1 + v_0 b_3) = 0,\\
			\overline{r_{12}}(v_0 \overline{a}_1 + v_1) = r_{23}(v_0 b_3 + v_1),\\
			r_{13}(v_1 + v_0 a_1) = 0.
		\end{cases} 
	\end{equation}
	If $v_1 + v_0 b_3 = 0$, then $a_1 = b_3 = -v_1/v_0$, and \eqref{eq:rr} forces $v_1 = 0$, a contradiction. Hence $r_{13} = 0$ and $\overline{r_{12}} = r_{23}$. Substituting into \eqref{eq:rr} and \eqref{eq:rr1} shows that $a_1$ and $b_3$ are real with $a_1 = b_3$.
	
	Thus, $R$ must be of the form
	\[
	R = \begin{bmatrix} 
		0 & r & 0 \\
		0 & 0 & \overline{r} \\ 
		0 & 0 & 0
	\end{bmatrix}.
	\]
	Consequently,
	\begin{equation}\label{truncated shift_T}
		T = \begin{bmatrix}
			0 & x & 0 & 0 & 0\\
			0 & 0 & r & 0 & 0\\
			0 & 0 & 0 & \overline{r} & 0 \\
			0 & 0 & 0 & 0 & x\\
			0 & 0 & 0 & 0 & 0
		\end{bmatrix}.
	\end{equation} 
	From the $z^2$-coefficient in \eqref{systm:eq1}, we have 
	\[
	2v_0 + v_1 x - v_1 |r|^2 x - v_0 |r|^2 x^2 = 0.
	\]   
	Using \eqref{eq:rr}, this reduces to
	\[
	2v_0 - v_0 x^2 - v_1 x = 0.
	\] 
	Since $T=S$ corresponds to $x=a$, we know $x=a$ is a root of $v_0 x^2 + v_1 x - 2v_0 = 0$. The second root is $x=-2/a$, and the sum of the roots gives $v_1/v_0 = 2/a - a$.
	
	From \eqref{eq:rr}, we further deduce $|r|^2 = 2 - x^2/2$. We distinguish two cases:
	
	\textbf{Case 1.} If $x = a$, then $|r|^2 = 2 - a^2/2 = a^2$, so $r = a e^{i\theta}$. Thus 
	\begin{equation}\label{T case1}
		T = a \begin{bmatrix}
			0 & 1 & 0 & 0 & 0\\
			0 & 0 & e^{i\theta} & 0 & 0\\
			0 & 0 & 0 & e^{-i\theta} & 0 \\
			0 & 0 & 0 & 0 & 1\\
			0 & 0 & 0 & 0 & 0
		\end{bmatrix}.
	\end{equation} 
	
	\textbf{Case 2.} If $x = -2/a = -\sqrt{3}$, then $|r|^2 = 2(1 - 1/a^2) = 1/2$, so $r = \frac{1}{\sqrt{2}} e^{i\theta}$. Hence
	\begin{equation}\label{T case2}
		T = \begin{bmatrix}
			0 & -\sqrt{3} & 0 & 0 & 0\\
			0 & 0 & \frac{1}{\sqrt{2}} e^{i\theta} & 0 & 0\\
			0 & 0 & 0 & \frac{1}{\sqrt{2}} e^{-i\theta} & 0 \\
			0 & 0 & 0 & 0 & -\sqrt{3} \\
			0 & 0 & 0 & 0 & 0
		\end{bmatrix}.
	\end{equation} 
	
	It remains to verify that these matrices indeed lie in the Harnack part of $S$ in $C_2(\mathbb{C}^5)$. By Corollary~\ref{Corollary2}, it suffices to check conditions (a)--(c). Condition (a) holds trivially. Condition (b) is satisfied by construction, as our derivation shows that the system $K_z^2(T)v(z)=0$ reduces to identities \eqref{systm:eq1}--\eqref{systm:eq4}, which are fulfilled for the obtained parameters. For condition (c), consider Case 2 (Case 1 is analogous and simpler). Here
	\[
	A = \begin{bmatrix} -\sqrt{3} & 0 & 0 \end{bmatrix}, \quad
	B = \begin{bmatrix} 0 \\ 0 \\ -\sqrt{3} \end{bmatrix}, \quad
	R = \frac{1}{\sqrt{2}} \begin{bmatrix}
		0 & e^{i\theta} & 0 \\ 
		0 & 0 & e^{-i\theta} \\
		0 & 0 & 0
	\end{bmatrix},
	\] 
	and 
	\[
	M = I_2 - \frac{1}{4}(A^*A + BB^*) = \begin{bmatrix}
		\frac{1}{4} & 0 & 0 \\ 
		0 & 1 & 0 \\
		0 & 0 & \frac{1}{4}
	\end{bmatrix}.
	\] 
	Direct computation verifies \eqref{p1}--\eqref{p5}. In particular,
	\[
	3 - \frac{1}{4}\bigl( \| A \|^2 + \| B \|^2 \bigr) = \frac{3}{2} > | \operatorname{Tr}(R) | = 0.
	\]
	Thus, all conditions of Corollary~\ref{Corollary2} are verified, confirming $T \stackrel{H}{\sim} S$. This completes the proof.
\end{proof}
We now complete the spectral analysis of the block $R$ by considering the case where $0$ is an eigenvalue of multiplicity $1$. Specifically, we assume $\sigma(R)=\{0, \lambda_1, \lambda_2\}$ with $\lambda_1, \lambda_2 \in \mathbb{D} \setminus \{0\}$ and $\lambda_1 \neq \lambda_2$. The following proposition shows that this configuration is incompatible with the Harnack equivalence conditions.

\begin{proposition}\label{Prop:case2}
	There exists no operator $T$ in the Harnack part of $S$ in $C_2(\mathbb{C}^{5})$ such that the block $R$ in the decomposition \eqref{form of T} satisfies $\sigma(R)=\{0, \lambda_1, \lambda_2\}$ with $\lambda_1, \lambda_2 \in \mathbb{D} \setminus \{0\}$ and $\lambda_1 \neq \lambda_2$.
\end{proposition}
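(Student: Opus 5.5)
Assume, for contradiction, that $T \stackrel{H}{\sim} S$ and that $\sigma(R)=\{0,\lambda_1,\lambda_2\}$ with $\lambda_1,\lambda_2\in\mathbb{D}\setminus\{0\}$ distinct. Then $R$ is diagonalizable with minimal polynomial $t(t-\lambda_1)(t-\lambda_2)$. I will repeat the scheme of Proposition~\ref{Harnack_part_S_5}, replacing the resolvent formula there by
\[
(I_2-\overline{z}R)^{-1}=I_2+\overline{z}R+\frac{\overline{z}^2}{(1-\lambda_1\overline{z})(1-\lambda_2\overline{z})}\bigl(R^2-(\lambda_1+\lambda_2)\overline{z}R^2+\lambda_1\lambda_2\overline{z}R\bigr),
\]
or any equivalent expression obtained from $R^3=(\lambda_1+\lambda_2)R^2-\lambda_1\lambda_2R$. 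One could also use spectral projections $P_0,P_1,P_2$ and write $(I_2-\overline{z}R)^{-1}=P_0+\sum_j(1-\lambda_j\overline{z})^{-1}P_j$. For $z\in\mathbb{T}$ I then insert this into $K_z^2(T)v(z)=0$ from Corollary~\ref{Corollary2S5}(b). Multiplying the three block equations by $z^2(z-\lambda_1)(z-\lambda_2)$ and by $(1-\overline{\lambda_1}z)(1-\overline{\lambda_2}z)$ as needed gives polynomial identities, analogues of \eqref{systm:eq1}--\eqref{systm:eq4}, which extend to all $z\in\mathbb{C}$.

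Next I compare the extreme coefficients. As before, the top and bottom coefficients of the analogue of \eqref{systm:eq4} should give $a_3=b_1$, together with relations of the type $AR\mathbf{e}_3=\mathbf{e}_1^\top RB$. The lowest coefficient (in $z$) and the highest one (in $z^{6}$ now) of the analogue of \eqref{systm:eq2} should give the analogues of \eqref{eq2:z} and \eqref{eq2:z5}. These take the form
\[
v_1R\mathbf{e}_1=\sigma_1 v_0A^*+c\,v_1\mathbf{e}_1 \quad\text{and}\quad v_1R^*\mathbf{e}_3=\overline{\sigma_1}v_0B+\overline{c}\,v_1\mathbf{e}_3,
\]
possibly with $R^2$ corrections, where $\sigma_1=\lambda_1+\lambda_2$ and $\sigma_2=\lambda_1\lambda_2\ne0$. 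I then pair them with $A$ and $B^*$, as was done after \eqref{E1}. In the same way as before, the pairing should force $a_1$ and $b_3$ to be real roots of one quadratic $v_0x^2+v_1x+v_0(|a_2|^2+|a_3|^2-2)=0$. That step needs an analogue of \eqref{key_equal}, which comes from adding the $\mathbf{e}_1$-component of the $z^2$ coefficient to the $\mathbf{e}_3$-component of the adjoint of the symmetric coefficient. Here the factor $(1-|\lambda|^2)$ should become something like $(1-|\lambda_1|^2)(1-|\lambda_2|^2)$ or $|1-\lambda_1\overline{\lambda_2}|^2$, which is nonzero in $\mathbb{D}$.

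To reach a contradiction I use trace identities. Since $\operatorname{Tr}R=\lambda_1+\lambda_2$, inserting the formulas for $R\mathbf{e}_1$ and $R^*\mathbf{e}_3$ gives $r_{11}$, $r_{33}$ and, via $a_1+b_3=-v_1/v_0$, the entry $r_{22}$ in terms of $\sigma_1$, as in \eqref{E1}. If this gives $\sigma_1=\lambda_1+\lambda_2=0$, I still need a second invariant. I will take $\operatorname{Tr}R^2=\lambda_1^2+\lambda_2^2$, or equivalently the second elementary symmetric function, namely the sum of the principal $2\times2$ minors, which equals $\sigma_2$. This I would evaluate from the same first-column and third-row relations together with the middle ($z^3$) coefficient. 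The aim is to show that this sum vanishes, which gives $\sigma_2=0$ and contradicts $\lambda_1\lambda_2\ne0$. If the coefficient identities do not close, I will fall back on the algebraic conditions \eqref{p2}--\eqref{p4}, whose coefficients are the elementary symmetric functions of $M$ and $R$. With $\det R=0$ and the structural relations above, these should force $\operatorname{Tr}R$ and the minor sum to vanish.

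The main obstacle is the bookkeeping in the middle coefficients. The degree grows by one, and the $R^2$ terms with coefficients $\sigma_1,\sigma_2$ no longer cancel as neatly as $R^3=\lambda R^2$ allowed. The delicate step is obtaining the analogue of \eqref{key_real} with a factor that is manifestly nonzero; I would try the spectral-projection form first, since it separates the contributions of $\lambda_1$ and $\lambda_2$.
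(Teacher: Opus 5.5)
Your outline follows the paper's own route. You write the resolvent as a quadratic in $R$ via $R^3=\sigma_1R^2-\sigma_2R$, clear denominators, compare extreme coefficients, derive $\operatorname{Re}a_1=\operatorname{Re}b_3$ and a quadratic for $a_1,b_3$, and finish with trace identities. The trouble is that the step which must produce the contradiction is never carried out (``the aim is to show'', ``these should force''), and the two devices you count on for it do not work.

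First, once $a_1,b_3$ are real with $\operatorname{Re}a_1=\operatorname{Re}b_3$, they are \emph{equal}, i.e.\ the same root of $v_0x^2+v_1x+\cdots=0$. So Vieta does not give $a_1+b_3=-v_1/v_0$. In the actual solutions $a_1=b_3\in\{2/\sqrt3,-\sqrt3\}$, while $-v_1/v_0=-1/\sqrt3$.

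Second, trace invariants cannot by themselves give a contradiction. The relations you feed into them (lowest and highest coefficients, e.g.\ \eqref{eq2case2:z0}, \eqref{eq2case2:z6}) concern only the first column and third row of $R$ or of $R^2-\sigma_1R$. Hence $\operatorname{Tr}R=\sigma_1$ or $\operatorname{Tr}(R^2-\sigma_1R)=-2\sigma_2$ merely \emph{solves} for the $(2,2)$ entry. The principal-minor sum involves middle entries you have not evaluated independently. Such identities are automatically consistent.

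The paper's closing step breaks down at the same point. Granting its $a_1+b_3=-v_1/v_0$, the $\mathbf{e}_1,\mathbf{e}_3$ diagonal entries of $R^2-\sigma_1R-\sigma_2I_2$ sum to $-\frac{v_0}{v_1}\sigma_2(a_1+b_3)-6\sigma_2=-5\sigma_2$, and the $\mathbf{e}_2$ entry is $0$. The eigenvalues $0,\lambda_1,\lambda_2$ give $-\sigma_2-2\sigma_2-2\sigma_2=-5\sigma_2$, so nothing is contradicted. (Incidentally, your displayed resolvent is off. By Cayley--Hamilton your bracket equals $R^2(I_2-\overline{z}R)$, whereas it should be $R^2-\lambda_1\lambda_2\overline{z}R$.)

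The missing idea is genuine control of the middle of $R$. It comes most cleanly without any resolvent expansion, using only (a) and (b) of Corollary~\ref{Corollary2S5}. Put $u(z)=(I-\overline{z}T)^{-1}v(z)$; the kernel equation says $u(z)=-(I-zT^*)^{-1}v(z)$ as well. On $\mathbb{T}$ the first expression equals $z(zI-T)^{-1}v(z)$, with poles in $\sigma(T)\subset\mathbb{D}$, while the second has poles outside $\overline{\mathbb{D}}$. Hence $u$ is a polynomial of degree at most $4$, and $(zI-T)u=zv$, $(I-zT^*)u=-v$ hold identically. Normalize $v_0=1$, $v_1=1/\sqrt3$, and set $x=-(A^*+v_1\mathbf{e}_1)$, $y=-(B+v_1\mathbf{e}_3)$, $m=R^*x$. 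Comparing coefficients gives
\[
Rx=0,\qquad R^*y=0,\qquad Ry=m,\qquad Rm=x-v_1\mathbf{e}_1,\qquad R^*m=y-v_1\mathbf{e}_3,
\]
together with $Ay=B^*x=Am=B^*m=0$ and $\|x\|^2+v_1x_1=2=\|y\|^2+v_1y_3$. Using $\langle m,x\rangle=\langle m,y\rangle=\langle Rm,y\rangle=\langle R^*m,x\rangle=0$, one gets $m\in\mathbb{C}\mathbf{e}_2\setminus\{0\}$, $x_3=y_1=0$ and $\langle x,y\rangle=0$. Comparing the two resulting expressions for $r_{22}$ then gives $x\in\mathbb{R}\mathbf{e}_1\setminus\{0\}$ and $y\in\mathbb{R}\mathbf{e}_3\setminus\{0\}$. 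Hence $R\mathbf{e}_1=0$, $R^*\mathbf{e}_3=0$ and $R\mathbf{e}_2\in\mathbb{C}\mathbf{e}_1$. So $R$ is strictly upper triangular and therefore nilpotent, contradicting $\lambda_1\lambda_2\neq0$. One more step ($r_{13}=0$, and $x_1=y_3$ a root of $\xi^2+v_1\xi-2=0$) recovers exactly the two families of Theorem~\ref{Harnack_part_S5}.
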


The following lemma provides the explicit resolvent expansion required for the proof.

\begin{lemma}\label{lemma:Rn_unified}
	Let $R \in B(\mathbb{C}^3)$ such that $\sigma(R)=\{0, \lambda_1, \lambda_2\}$ with $\lambda_1, \lambda_2 \in \mathbb{D} \setminus \{0\}$. Then for any integer $n \geq 1$, the power $R^n$ can be expressed as
	\begin{equation}\label{eq:Rn_unified}
		R^n = A_n R^2 + B_n R,
	\end{equation}
	where the coefficients $A_n, B_n$ are given by
	\begin{equation}\label{eq:AnBn_unified}
		A_n = \begin{cases} 
			\displaystyle \frac{\lambda_1^{n-1} - \lambda_2^{n-1}}{\lambda_1 - \lambda_2}, & \text{if } \lambda_1 \neq \lambda_2, \\[10pt]
			(n-1)\lambda_1^{n-2}, & \text{if } \lambda_1 = \lambda_2,
		\end{cases}
		\quad 
		B_n = \begin{cases} 
			\displaystyle -\lambda_1 \lambda_2 \frac{\lambda_1^{n-2} - \lambda_2^{n-2}}{\lambda_1 - \lambda_2}, & \text{if } \lambda_1 \neq \lambda_2, \\[10pt]
			-(n-2)\lambda_1^{n-1}, & \text{if } \lambda_1 = \lambda_2.
		\end{cases}
	\end{equation}
	Consequently, for any $z \in \overline{\mathbb{D}}$, the resolvent expansion $(I-\overline{z}R)^{-1} = I_2 + c_1(z)R + c_2(z)R^2$ holds with coefficients
	\begin{align}
		c_1(z) &= \frac{\overline{z} - \overline{z}^2(\lambda_1 + \lambda_2)}{(1-\overline{z}\lambda_1)(1-\overline{z}\lambda_2)}, \label{c1} \\
		c_2(z) &= \frac{\overline{z}^2}{(1-\overline{z}\lambda_1)(1-\overline{z}\lambda_2)}. \label{c2}
	\end{align}
\end{lemma}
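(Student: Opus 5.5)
The plan is to use Cayley--Hamilton for the recursion and then sum the Neumann series (or check the inverse directly). Since $R\in B(\mathbb{C}^3)$ has $\sigma(R)=\{0,\lambda_1,\lambda_2\}$ with $\lambda_1,\lambda_2\neq 0$, the eigenvalue $0$ is simple. The characteristic polynomial of $R$ is therefore $t(t-\lambda_1)(t-\lambda_2)=t^3-st^2+pt$, where $s=\lambda_1+\lambda_2$ and $p=\lambda_1\lambda_2$. This holds also when $\lambda_1=\lambda_2$, in which case $\lambda_1$ has multiplicity $2$. Cayley--Hamilton then gives
\[
R^3=sR^2-pR .
\]

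I will prove \eqref{eq:Rn_unified} by induction on $n$.

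\textbf{Base cases.} For $n=1$, the formulas \eqref{eq:AnBn_unified} give $A_1=0$. They also give
\[
B_1=-\lambda_1\lambda_2\,\frac{\lambda_1^{-1}-\lambda_2^{-1}}{\lambda_1-\lambda_2}=1 .
\]
This step uses $\lambda_i\neq0$, and in the equal case it reads $B_1=-(-1)\lambda_1^{0}=1$. For $n=2$ one gets $A_2=1$ and $B_2=0$. So both base cases hold.

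\textbf{Inductive step.} Multiplying $R^n=A_nR^2+B_nR$ by $R$ and substituting $R^3$ gives
\[
R^{n+1}=(sA_n+B_n)R^2-pA_nR .
\]
So it suffices to check two recursions, $B_{n+1}=-pA_n$ and $A_{n+1}=sA_n+B_n=sA_n-pA_{n-1}$.
The first is immediate from the displayed formulas.
The second is the standard Lucas-sequence recursion for $(\lambda_1^{m}-\lambda_2^{m})/(\lambda_1-\lambda_2)$, checked via $\lambda_i^{2}=s\lambda_i-p$. In the equal case $s=2\lambda_1$ and $p=\lambda_1^2$, and a direct check works (equivalently, pass to the limit $\lambda_2\to\lambda_1$).

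For the resolvent, take $z\in\overline{\mathbb{D}}$. Since $r(R)<1$, the series $\sum_{n\ge0}\overline z^{\,n}R^n$ converges and equals $(I_2-\overline zR)^{-1}$. I will bypass the generating-function sums by verifying directly that
\[
(I_2-\overline zR)\bigl(I_2+c_1R+c_2R^2\bigr)=I_2 .
\]
Expanding and reducing with $R^3=sR^2-pR$, the left side equals
\[
I_2+(c_1-\overline z-p\overline z c_2)R+(c_2-\overline z c_1-s\overline z c_2)R^2 .
\]
So it is enough to check that both bracketed scalar coefficients vanish, i.e.
\[
c_1=\overline z+p\overline zc_2, \qquad c_2(1-s\overline z)=\overline zc_1 .
\]
Put $D=(1-\overline z\lambda_1)(1-\overline z\lambda_2)=1-s\overline z+p\overline z^{2}$, which is nonzero because $|\overline z\lambda_i|<1$.
\begin{itemize}
\item With $c_2=\overline z^{2}/D$, one has $c_1D=\overline zD+p\overline z^{3}=\overline z-s\overline z^{2}$. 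This matches \eqref{c1}.
\item For the second identity, $\overline zc_1D=\overline z^{2}-s\overline z^{3}=\overline z^{2}(1-s\overline z)=c_2D(1-s\overline z)$.
\end{itemize}

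There is no serious obstacle. The only points needing care are the exponent $n-2=-1$ in $B_1$, which is where $\lambda_i\neq0$ is used, and the uniform treatment of the case $\lambda_1=\lambda_2$. I handle the latter through the characteristic polynomial rather than diagonalization, since $R$ need not be diagonalizable there.
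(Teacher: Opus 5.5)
Your proof is correct in substance, but it takes a different route from the paper, and it contains one sign slip that needs fixing.

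\textbf{Comparison of routes.} The paper derives $A_n, B_n$ by interpolation. It seeks $p(t)=A_nt^2+B_nt$ agreeing with $t^n$ on the spectrum, using Lagrange conditions when $\lambda_1\neq\lambda_2$ and Hermite conditions $p(\lambda)=\lambda^n$, $p'(\lambda)=n\lambda^{n-1}$ in the repeated case. It then obtains $c_1,c_2$ by summing the Neumann series $\sum_n \overline{z}^{\,n}R^n$ coefficientwise. That summation needs geometric sums in one case and arithmetic--geometric sums in the other.

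You instead verify the stated formulas. Your induction, driven by $R^3=sR^2-pR$, reduces the first part to the scalar recursions $B_{n+1}=-pA_n$ and $A_{n+1}=sA_n+B_n$. Your resolvent check is a finite algebraic identity that uses only Cayley--Hamilton and $D\neq 0$, so it does not rely on the power formula or on any series. The paper's approach explains where the formulas come from. Yours handles the repeated-eigenvalue case uniformly through $s$ and $p$, and avoids both the Hermite interpolation step and the series bookkeeping.

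\textbf{The sign slip.} Since $-\overline{z}c_2R^3=-s\overline{z}c_2R^2+p\overline{z}c_2R$, the coefficient of $R$ is $c_1-\overline{z}+p\overline{z}c_2$, not $c_1-\overline{z}-p\overline{z}c_2$. The correct condition is therefore $c_1=\overline{z}-p\overline{z}c_2$. With $c_2=\overline{z}^{2}/D$ this gives $c_1D=\overline{z}D-p\overline{z}^{3}=\overline{z}-s\overline{z}^{2}$, matching \eqref{c1}.

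As written, you have the wrong sign in both places. Your line $\overline{z}D+p\overline{z}^{3}=\overline{z}-s\overline{z}^{2}$ is false: the left side equals $\overline{z}-s\overline{z}^{2}+2p\overline{z}^{3}$. The two errors cancel, so your conclusion is right, but the displayed expansion and that line must be corrected. Your second identity, $c_2(1-s\overline{z})=\overline{z}c_1$, is fine as stated.
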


\begin{proof}
	The characteristic polynomial of $R$ is $\chi_R(t) = t(t-\lambda_1)(t-\lambda_2)$. By the Cayley--Hamilton theorem, $R$ satisfies this polynomial, implying $R^3 = (\lambda_1+\lambda_2)R^2 - \lambda_1\lambda_2 R$. Thus, $R^n \in \operatorname{span}\{R, R^2\}$ for all $n \geq 1$.
	
	To determine $A_n$ and $B_n$, we seek a polynomial $p(t) = A_n t^2 + B_n t$ such that $p(R) = R^n$. This requires $p(t)$ to interpolate $f(t)=t^n$ on the spectrum of $R$.
	\begin{itemize}
		\item \textbf{Case $\lambda_1 \neq \lambda_2$:} The spectrum consists of distinct points $\{0, \lambda_1, \lambda_2\}$. The interpolation conditions $p(\lambda_i) = \lambda_i^n$ for $i=1,2$ yield a linear system whose solution gives the quotient formulas in \eqref{eq:AnBn_unified}.
		\item \textbf{Case $\lambda_1 = \lambda_2 = \lambda$:} The eigenvalue $\lambda$ has multiplicity $2$. We must satisfy the Hermite interpolation conditions $p(\lambda) = \lambda^n$ and $p'(\lambda) = n\lambda^{n-1}$. Solving the resulting system yields the derivative-based formulas in \eqref{eq:AnBn_unified}. Note that these expressions are precisely the limits of the distinct-case formulas as $\lambda_2 \to \lambda_1$.
	\end{itemize}
	
	For the resolvent, we use the Neumann series $(I-\overline{z}R)^{-1} = \sum_{n=0}^\infty \overline{z}^n R^n$, which converges for $|z| \leq 1$. Substituting \eqref{eq:Rn_unified} gives
	\[
	(I-\overline{z}R)^{-1} = I_2 + \left(\sum_{n=1}^\infty A_n \overline{z}^n\right) R^2 + \left(\sum_{n=1}^\infty B_n \overline{z}^n\right) R.
	\]
	Identifying $c_2(z) = \sum_{n=1}^\infty A_n \overline{z}^n$ and $c_1(z) = \sum_{n=1}^\infty B_n \overline{z}^n$:
	\begin{itemize}
		\item If $\lambda_1 \neq \lambda_2$, summing the geometric progressions yields \eqref{c1} and \eqref{c2} directly.
		\item If $\lambda_1 = \lambda_2 = \lambda$, the series become arithmetic-geometric. For example, $c_2(z) = \sum_{n=1}^\infty (n-1)\lambda^{n-2}\overline{z}^n = \overline{z}^2 \sum_{k=0}^\infty (k+1)(\lambda\overline{z})^k = \frac{\overline{z}^2}{(1-\overline{z}\lambda)^2}$.
	\end{itemize}
	In both cases, the resulting expressions coincide with the unified formulas \eqref{c1} and \eqref{c2}, where the denominator $(1-\overline{z}\lambda_1)(1-\overline{z}\lambda_2)$ naturally reduces to $(1-\overline{z}\lambda)^2$ in the repeated case.
\end{proof}

\begin{proof}[Proof of Proposition \ref{Prop:case2}]
	Suppose, for contradiction, that there exists $T$ in $C_2(\mathbb{C}^{5})$ such that $T \stackrel{H}{\sim} S$ and $\sigma(R)=\{0, \lambda_1, \lambda_2\}$ with $\lambda_1, \lambda_2 \in \mathbb{D} \setminus \{0\}$. By Lemma~\ref{lemma:Rn_unified}, the resolvent $(I_2 - \overline{z}R)^{-1}$ expands as a quadratic polynomial in $R$. For $z \in \mathbb{D}$, we have
	\begin{equation}\label{resolvent_expansion}
		\mathbf{R}(\overline{z}, \lambda_1, \lambda_2) = (I_2 - \overline{z}R)^{-1} =  I_2 + c_1(z) R + c_2(z) R^2,
	\end{equation}
	where $c_1(z)$ and $c_2(z)$ are given by \eqref{c1} and \eqref{c2}. Note that these coefficients have poles at $z = 1/\overline{\lambda}_1$ and $z = 1/\overline{\lambda}_2$, which lie outside $\overline{\mathbb{D}}$ since $|\lambda_1|, |\lambda_2| < 1$.
	
	As in the proof of Proposition~\ref{Harnack_part_S_5}, the kernel condition $K_z^2(T)v(z) = 0$ yields the system
	\begin{equation}\label{system01_2} 
		\begin{cases}
			2v_0 + A \mathbf{R}(\overline{z}, \lambda_1, \lambda_2) w(z) - z^2 v_0 A \mathbf{R}(\overline{z}, \lambda_1, \lambda_2) B = 0,\\
			v_0 \mathbf{R}^*(z, \overline{\lambda}_1, \overline{\lambda}_2) A^* + K_z^2(R) w(z) - v_0 z^2 \mathbf{R}(\overline{z}, \lambda_1, \lambda_2) B = 0,\\
			v_0 B^* \mathbf{R}^*(z, \overline{\lambda}_1, \overline{\lambda}_2) A^* + B^* \mathbf{R}^*(z, \overline{\lambda}_1, \overline{\lambda}_2) w(z) - 2v_0 z^2 = 0.
		\end{cases} 
	\end{equation}
	Combining the first and third equations eliminates the constant term $2v_0$, yielding the analogue of \eqref{eqfour},
	\begin{equation}\label{eqfour_2}
		A \mathbf{R}(\overline{z}, \lambda_1, \lambda_2) w(z) = -z^2 w^*(z) \mathbf{R}(z, \overline{\lambda}_1, \overline{\lambda}_2) B.
	\end{equation}
	Let $P_1(\zeta) = (1 - \zeta\lambda_1)(1 - \zeta\lambda_2) = 1 - \sigma_1\zeta + \sigma_2\zeta^2$. The key equation \eqref{eqfour_2} can be rewritten as,
	\begin{align}\label{poly_identity}
		&A \left[ P_1(\overline{z}) I_2 + (\overline{z} - \overline{z}^2\sigma_1) R + \overline{z}^2 R^2 \right] w(z) \nonumber \\
		&\quad = -z^2 w^*(z)  \left[ P_1(\overline{z}) I_2 + (\overline{z} - \overline{z}^2\sigma_1) R + \overline{z}^2 R^2 \right] B.
	\end{align}
	Recall $w(z) = v_1 (1, 0, -z^2)^\top$. By analytic continuation, this identity holds for all $z \in \mathbb{C}$. Expanding both sides as polynomials in $z$ and comparing coefficients of $z^k$ for $k = 0, 1, \dots, 4$ yields, 
		\begin{equation}\label{coeff_z0}
			A (\sigma_2 I_2 - \sigma_1 R + R^2) \mathbf{e}_1 = \mathbf{e}_3^\top (\sigma_2 I_2 - \sigma_1 R + R^2) B,
		\end{equation} 
		\begin{equation}\label{coeff_z1}
			A (-\sigma_1 I_2 + R) \mathbf{e}_1 = \mathbf{e}_3^\top (-\sigma_1 I_2 + R) B,
		\end{equation}
		\begin{equation}
			 A \mathbf{e}_1 - A(-\sigma_1 R + R^2)\mathbf{e}_3
		 - \mathbf{e}_3^\top B + \mathbf{e}_1^\top (-\sigma_1 R + R^2) B = 0, \label{coeff_z2}
		\end{equation} 
	\begin{equation}
	  A(-\sigma_1 I_2 + R)\mathbf{e}_3 -  \mathbf{e}_1^\top (-\sigma_1 I_2 + R) B = 0, \label{coeff_z3}
		\end{equation} 
and
		\begin{equation}\label{coeff_z4}
			A \mathbf{e}_3 = \mathbf{e}_1^\top B.
		\end{equation}
	Equation \eqref{coeff_z4} immediately implies $a_3 = b_1$.
	
	Next, we analyze the first equation of \eqref{system01_2}. Let $Q(z) = (z-\lambda_1)(z-\lambda_2) = z^2 - \sigma_1 z + \sigma_2$. Multiplying by $Q(z)$ yields
	\begin{align}\label{eq:1c2}
		2v_0 Q(z) &+ A \left( Q(z)I_2 + (z-\sigma_1) R + R^{2} \right) w(z) \nonumber \\
		& - z^2 v_0 A \left( Q(z)I_2 + (z-\sigma_1) R + R^{2} \right) B = 0.
	\end{align}
	Evaluating at $z=0$ (so $Q(0)=\sigma_2$) gives
	\begin{equation}\label{eq:c21z0}
		A \left( \sigma_2 I_2 - \sigma_1 R + R^{2} \right) \mathbf{e}_1 = 2v_0 \sigma_2.
	\end{equation}
 Let $P(z) = (1-z\overline{\lambda}_1)(1-z\overline{\lambda}_2) = \overline{\sigma}_2 z^2 - \overline{\sigma}_1 z + 1$. The third equation of \eqref{system01_2} can be written as,
	\begin{align}\label{eq:3c2}
		v_0 B^* & \left( P(z)I_2 + (z-z^2\overline{\sigma}_1) R^* + z^2R^{*2} \right) B^* \nonumber \\
		& + B^* \left( P(z)I_2 + (z-z^2\overline{\sigma}_1) R^* + z^2R^{*2} \right) w(z)- 2z^2 v_0 P(z) = 0.
	\end{align}
The coefficients of $z^4$ gives
	\begin{equation}\label{eq:c21z4}
	v_1	B^* \left( \overline{\sigma}_2 I_2 - \overline{\sigma}_1 R^* + R^{*2} \right) \mathbf{e}_1 = -2v_0 \overline{\sigma}_2.
	\end{equation}

	We now turn to the second vector equation in \eqref{system01_2}. Writing first the resolvent terms explicitly,
	\begin{align}
		v_0\mathbf{R}^*(z, \overline{\lambda}_1, \overline{\lambda}_2)A^* &= v_0 \left( I_2 + \overline{c_1(z)} R^* + \overline{c_2(z)} R^{*2} \right) A^*, \label{app:term1} \\
		K_z^2(R) w(z) &= 2v_1 \begin{bmatrix} 1 \\ 0 \\ -z^2 \end{bmatrix} + v_1 \left( \overline{c_1(z)} R^* + c_1(z) R \right) \begin{bmatrix} 1 \\ 0 \\ -z^2 \end{bmatrix} \nonumber \\
		&\quad + v_1 \left( \overline{c_2(z)} R^{*2} + c_2(z) R^2 \right) \begin{bmatrix} 1 \\ 0 \\ -z^2 \end{bmatrix}, \label{app:term2} \\
		-v_0 z^2 \mathbf{R}(\overline{z}, \lambda_1, \lambda_2) B &= -v_0 z^2 B - v_0 z^2 c_1(z) R B - v_0 z^2 c_2(z) R^2 B. \label{app:term3}
	\end{align}
	Multiplying the sum of \eqref{app:term1}--\eqref{app:term3} by $P(z)\overline{P}(z)$, where
	\begin{align*}
		P(z) &= (1-z\overline{\lambda}_1)(1-z\overline{\lambda}_2) = \overline{\sigma}_2 z^2 - \overline{\sigma}_1 z + 1, \\
		\overline{P}(z) &= \overline{z}^2 (z-\lambda_1)(z-\lambda_2) = \overline{z}^2 Q(z),
	\end{align*}
we obtain then the following polynomial vector equation,
	\begin{align}\label{app:vector_cleared}
		v_0 &\Bigl( Q(z) P(z)  +  zQ(z) (1 - z\overline{\sigma}_1) R^*  +  z^2Q(z) R^{*2}\Bigr)   A^* \nonumber \\
		&+ v_1 \Bigl( Q(z) P(z)  +  zQ(z) (1 - z\overline{\sigma}_1) R^*  +  z^2Q(z) R^{*2}\Bigr)  \begin{bmatrix} 1 \\ 0 \\ -z^2 \end{bmatrix} \\
		&+ v_1 \Bigl( Q(z) P(z)+ P(z) (z - \overline{\sigma}_1) R +P(z) R^2 \Bigr)  \begin{bmatrix} 1 \\ 0 \\ -z^2 \end{bmatrix} \nonumber \\
		&\qquad \qquad - v_0 z^2\Bigl(  Q(z) P(z)  -  P(z) (z - \overline{\sigma}_1) R  - P(z) R^2\Bigr)   B = 0.\nonumber 
	\end{align}
	Extracting the coefficients of $z^0$ and $z^6$ gives, respectively,
	\begin{align}
		v_0 \sigma_2 A^* + v_1 \left( R^2 - \sigma_1 R + 2\sigma_2 I_2 \right) \mathbf{e}_1 &= 0, \label{eq2case2:z0} \\
		v_1 \left( R^{*2} - \overline{\sigma}_1 R^* + 2\overline{\sigma}_2 I_2 \right) \mathbf{e}_3 + v_0 \overline{\sigma}_2 B &= 0. \label{eq2case2:z6}
	\end{align}
	The coefficients of $z^2$ and $z^4$ yield the following vector equations,
	\begin{equation}\label{eq2case2:z2}   
		\begin{aligned}
			&v_0\Bigl( (1 + |\sigma_1|^2 + |\sigma_2|^2) +  (\sigma_1 + \sigma_2 \overline{\sigma}_1) R^*  + \sigma_2 R^{*2}\Bigr) A^* \\
			&+ v_1 \Bigl(\sigma_2 R^{*2} -(\sigma_1 + \sigma_2 \overline{\sigma}_1) R^* - (\overline{\sigma}_1 + \sigma_1 \overline{\sigma}_2) R + \overline{\sigma}_2 R^2 2v_1 (1 + |\sigma_1|^2 + |\sigma_2|^2)\Bigr) \mathbf{e}_1 \\
			&-v_1 \left( R^{2} - \overline{\sigma}_1 R + 2\sigma_2 I_2 \right) \mathbf{e}_3- v_0\Bigl( \sigma_2  - \sigma_1 R  + R^2\Bigr) B = 0,
		\end{aligned}
	\end{equation}
	and \begin{align}
		&	v_0 \Bigl( R^{*2} - \overline{\sigma}_1 R^* + \overline{\sigma}_2 I_2\Bigr) A^*+ v_1\Bigl( R^{*2} - \overline{\sigma}_1 R^* + 2\overline{\sigma}_2 I_2 \Bigr) \mathbf{e}_1 \nonumber \\
		&+ v_1\Bigl( -\sigma_2 R^{*2} + (\sigma_1 + \sigma_2 \overline{\sigma}_1) R^* + (\overline{\sigma}_1 + \sigma_1 \overline{\sigma}_2) R - \overline{\sigma}_2 R^2 - 2(1 + |\sigma_1|^2 + |\sigma_2|^2)\Bigr) \mathbf{e}_3  \label{eq2case2:z4}\\
		&- v_0 \Bigl(\overline{\sigma}_2 R^2 - (\overline{\sigma}_1 + \sigma_1 \overline{\sigma}_2) R + (1 + |\sigma_1|^2 + |\sigma_2|^2) I_2\Bigr) B = 0, \nonumber 
	\end{align}
	Taking the inner product of \eqref{eq2case2:z2} with $\mathbf{e}_1$ and adding it to the adjoint of \eqref{eq2case2:z4} applied to $\mathbf{e}_3$, then using  \eqref{coeff_z1}--\eqref{coeff_z3}, \eqref{eq2case2:z0}, and \eqref{eq2case2:z6}, we obtain after straightforward calculus 
	\begin{equation}\label{key_real2}
		v_0\left(1-\left|\sigma_2 \right|^2  \right) \left(\operatorname{Re}(a_1) -\operatorname{Re}(b_3)\right)-v_1\left( \sigma_1(\overline{r}_{11}+\overline{r}_{33})-\overline{\sigma}_1(r_{11}+r_{33})\right) = 0. 
	\end{equation}
	Since the second term must be purely imaginary, it vanishes, forcing
	\begin{equation}\label{key_real_eq}
		\operatorname{Re}(a_1) = \operatorname{Re}(b_3).	
	\end{equation}
	
	Rearranging \eqref{eq2case2:z0} and \eqref{eq2case2:z6} gives
	\begin{align}
		v_1 \left( R^2 - \sigma_1 R + \sigma_2 I_2 \right) \mathbf{e}_1 &= -v_0 \sigma_2 A^* - v_1\sigma_2 \mathbf{e}_1, \label{eqcase2:z0a} \\
		v_1 \left( R^{*2} - \overline{\sigma}_1 R^* + \overline{\sigma}_2 I_2 \right) \mathbf{e}_3 &= -v_0 \overline{\sigma}_2 B - v_1\overline{\sigma}_2 \mathbf{e}_3. \label{eqcase2:z0b}
	\end{align}
Thus
\begin{align*}
	v_1 A \left( R^2 - \sigma_1 R + \sigma_2 I_2 \right) \mathbf{e}_1 &= -v_0 \sigma_2 AA^* - v_1\sigma_2 A\mathbf{e}_1, \\
	v_1 B^*\left( R^{*2} - \overline{\sigma}_1 R^* + \overline{\sigma}_2 I_2 \right) \mathbf{e}_3 &= -v_0 \overline{\sigma}_2 B^*B - v_1\overline{\sigma}_2 B^*\mathbf{e}_3. 
\end{align*}
Hence by \eqref{eq:c21z0} and \eqref{eq:c21z0}, we get
\begin{align*}
v_0 \sigma_2 AA^* + v_1\sigma_2 A\mathbf{e}_1 &= 2v_1\sigma_2, \\
v_0 \overline{\sigma}_2 B^*B + v_1\overline{\sigma}_2 B^*\mathbf{e}_3 &=2v_1\overline{\sigma}_2 . 
\end{align*}
	Assuming $\sigma_2 = \lambda_1\lambda_2 \neq 0$, taking in account \eqref{key_real_eq}, then both $a_1$ and $b_3$ are real and satisfy the same quadratic equation
	\begin{equation}\label{quadratic_general}
		v_0 x^2 + v_1 x + C = 0,
	\end{equation}
	so,
	\begin{equation}\label{sum_roots}
		a_1 + b_3 = -\frac{v_1}{v_0}.
	\end{equation}
	On the other hand, direct spectral computation using trace properties shows  $\operatorname{Tr}(R^2 - \sigma_1 R) = -2\sigma_2$. Computing the trace via the standard basis and using \eqref{eqcase2:z0a}--\eqref{eqcase2:z0b} yields
	\begin{align*}
		v_1 \operatorname{Tr}(R^2 - \sigma_1 R) &=v_1\left\langle (R^2 - \sigma_1 R)\mathbf{e}_1, \mathbf{e}_1 \right\rangle+v_1\left\langle (R^2 - \sigma_1 R)\mathbf{e}_2, \mathbf{e}_2 \right\rangle+v_1\left\langle (R^2 - \sigma_1 R)\mathbf{e}_3, \mathbf{e}_3 \right\rangle \\
		&= \left\langle -v_0 \sigma_2 A^* - v_1\sigma_2 \mathbf{e}_1, \mathbf{e}_1 \right\rangle + v_1\left\langle (R^2 - \sigma_1 R)\mathbf{e}_2, \mathbf{e}_2 \right\rangle \\
		&\quad + \left\langle \mathbf{e}_3, -v_0 \overline{\sigma}_2 B - v_1 \overline{\sigma}_2 \mathbf{e}_3 \right\rangle \\
		&= -v_0(a_1+b_3)\sigma_2 - 4v_1 \sigma_2 + v_1\left\langle (R^2 - \sigma_1 R)\mathbf{e}_2, \mathbf{e}_2 \right\rangle \\
		&= -3v_1 \sigma_2 + v_1\left\langle (R^2 - \sigma_1 R)\mathbf{e}_2, \mathbf{e}_2 \right\rangle.
	\end{align*}
	Dividing by $v_1$ and comparing with $\operatorname{Tr}(R^2 - \sigma_1 R) = -2\sigma_2$ forces
	\[
	\left\langle (R^2 - \sigma_1 R)\mathbf{e}_2, \mathbf{e}_2 \right\rangle = \sigma_2.
	\]
	Substituting this back into the trace expansion of $R^2 - \sigma_1 R - \sigma_2 I_2$ gives
\begin{align*}
		\operatorname{Tr}(R^2 - \sigma_1 R - \sigma_2 I_2) &= \left\langle (R^2 - \sigma_1 R - \sigma_2 I_2) \mathbf{e}_1, \mathbf{e}_1 \right\rangle + v_1\left\langle(R^2 - \sigma_1 R - \sigma_2 I_2)\mathbf{e}_3, \mathbf{e}_3\right\rangle \\
		&=\left\langle (R^2 - \sigma_1 R)   \mathbf{e}_1, \mathbf{e}_1 \right\rangle + \left\langle(R^2 - \sigma_1 R)\mathbf{e}_3, \mathbf{e}_3\right\rangle-2 \sigma_2 \\
		&=-\frac{v_0}{v_1}(a_1+b_3)\sigma_2 - 5\sigma_2 = -4\sigma_2.
\end{align*}
	However, the characteristic equation implies $\operatorname{Tr}(R^2 - \sigma_1 R - \sigma_2 I_2) = -3\sigma_2$. Equating these expressions yields $-4\sigma_2 = -3\sigma_2$, which forces $\sigma_2 = 0$. This contradicts the assumption $\lambda_1, \lambda_2 \neq 0$. 
	
	Thus, at least one of $\lambda_1$ or $\lambda_2$ must be zero, reducing the problem to the case treated in Proposition~\ref{Harnack_part_S_5}, where we established $\lambda_1 = \lambda_2 = 0$. Consequently, no operator $T$ with $\sigma(R)=\{0, \lambda_1, \lambda_2\}$ and $\lambda_1, \lambda_2 \neq 0$ can belong to the Harnack part of $S$. This completes the proof.
\end{proof}

Proposition \ref{Prop:case2} completes the spectral classification of the block $R$. Combined with Proposition \ref{Harnack_part_S_5}, we conclude that for any $T$ in the Harnack part of the truncated shift $S$ in $C_2(\mathbb{C}^{5})$, the block $R$ must be nilpotent (i.e., $\sigma(R)=\{0\}$).

\medskip

We now state our main theorem.

\begin{theorem}\label{Harnack_part_S5} 
	If $T\in C_2(\mathbb{C}^5)$ is in the Harnack part of $S$, then $T$ takes one of the following forms:
	\begin{equation*}
		T=\dfrac{2}{\sqrt{3}} \begin{bmatrix}
			0 & 1 & 0 & 0 & 0 \\
			0 & 0 & e^{i\theta} & 0 & 0 \\
			0 & 0 & 0 & e^{-i\theta} & 0 \\
			0 & 0 & 0 & 0 & 1 \\
			0 & 0 & 0 & 0 & 0
		\end{bmatrix},\quad \text{or} \quad 
		T=  \begin{bmatrix}
			0 & -\sqrt{3} & 0 & 0 & 0 \\
			0 & 0 & \dfrac{1}{\sqrt{2}}e^{i\theta} & 0 & 0 \\
			0 & 0 & 0 & \dfrac{1}{\sqrt{2}}e^{-i\theta} & 0 \\
			0 & 0 & 0 & 0 & -\sqrt{3} \\
			0 & 0 & 0 & 0 & 0
		\end{bmatrix},
	\end{equation*} 
	with $\theta \in \mathbb{R}$.	
\end{theorem}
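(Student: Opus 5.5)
The plan is to reduce Theorem~\ref{Harnack_part_S5} to the two propositions already established, using a case split on the spectrum of the middle block $R$.

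\textbf{Step 1: the block form.} Let $T\in C_2(\mathbb{C}^5)$ be Harnack equivalent to $S$. By Corollary~\ref{Corollary2}, $T$ has the block form \eqref{form of T} with $\sigma(R)\subset\mathbb{D}$ and $\det(R)=0$. Hence $0\in\sigma(R)$, and the characteristic polynomial of $R$ is $t(t-\lambda_1)(t-\lambda_2)$ for some $\lambda_1,\lambda_2\in\mathbb{D}$.

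\textbf{Step 2: the case split.} Exactly one of the following holds.
\begin{enumerate}
\item[(1)] $0$ has algebraic multiplicity at least $2$, that is, $\lambda_1=0$ or $\lambda_2=0$.
\item[(2)] $\lambda_1,\lambda_2\neq 0$ and $\lambda_1\neq\lambda_2$.
\item[(3)] $\lambda_1=\lambda_2=\lambda\neq 0$.
\end{enumerate}
In case (1), Proposition~\ref{Harnack_part_S_5} applies directly. It forces $\lambda=0$ and gives exactly the two families \eqref{T case12}. Since $a=1/\cos(\pi/6)=2/\sqrt3$, these coincide with the forms in the statement. Case (2) is excluded by Proposition~\ref{Prop:case2}.

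\textbf{Step 3: the repeated nonzero eigenvalue.} Case (3) is the only remaining gap, and I expect it to be the main obstacle, because Proposition~\ref{Prop:case2} is formally stated only for $\lambda_1\neq\lambda_2$. I would handle it by rerunning that proof unchanged.
\begin{itemize}
\item Lemma~\ref{lemma:Rn_unified} is stated for both cases. The resolvent expansion $(I-\overline{z}R)^{-1}=I_2+c_1(z)R+c_2(z)R^2$ holds with the same formulas \eqref{c1}--\eqref{c2}, with $\sigma_1=2\lambda$ and $\sigma_2=\lambda^2$.
\item Every subsequent identity in that proof depends only on $\sigma_1,\sigma_2$ and on the Cayley--Hamilton relation $R^3=\sigma_1R^2-\sigma_2R$. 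This covers the polynomial identities \eqref{coeff_z0}--\eqref{coeff_z4}, the coefficient equations \eqref{eq2case2:z0}--\eqref{eq2case2:z6}, the relation $\operatorname{Re}a_1=\operatorname{Re}b_3$, and the quadratic equation \eqref{quadratic_general}.
\item The final trace computations use only $\operatorname{Tr}(R^2-\sigma_1R)=-2\sigma_2$ and $\operatorname{Tr}(R^2-\sigma_1R-\sigma_2I_2)=-3\sigma_2$. Both follow from the eigenvalue multiset $\{0,\lambda,\lambda\}$ exactly as before.
\end{itemize}
The argument therefore again yields $-4\sigma_2=-3\sigma_2$, so $\lambda^2=0$, a contradiction. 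The point to check carefully is that no step divides by $\lambda_1-\lambda_2$. The coefficients of the lemma enter only through $P$ and $Q$, which are symmetric in $\lambda_1,\lambda_2$, so this should hold. An alternative would be a continuity or limiting argument, but I would avoid it: the polynomial identities are exact, so direct substitution is cleaner.

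\textbf{Step 4: conclusion.} After Steps 2 and 3, $R$ is nilpotent and we are back in case (1). The theorem then follows from Proposition~\ref{Harnack_part_S_5}, which also confirms that both families genuinely lie in the Harnack part.
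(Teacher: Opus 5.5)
Your reduction is the paper's own. The theorem is read off from Proposition~\ref{Harnack_part_S_5} and Proposition~\ref{Prop:case2}, and your remark about $\lambda_1=\lambda_2\neq0$ is fair: neither the proof of Proposition~\ref{Prop:case2} nor Lemma~\ref{lemma:Rn_unified} uses distinctness, so your Step~3 is what the paper does implicitly. The trouble is that the contradiction you propose to rerun does not exist.

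\textbf{The trace identity is false, and the computation is tautological.} You assert $\operatorname{Tr}(R^2-\sigma_1R-\sigma_2I_2)=-3\sigma_2$. But $I_2$ is the $3\times3$ identity, so this trace is $\operatorname{Tr}(R^2-\sigma_1R)-3\sigma_2=-5\sigma_2$; for $\{0,\lambda,\lambda\}$ the eigenvalues of $R^2-2\lambda R-\lambda^2I_2$ are $-\lambda^2,-2\lambda^2,-2\lambda^2$. Now set $X=R^2-\sigma_1R$. The relations \eqref{eqcase2:z0a}, \eqref{eqcase2:z0b} and $a_1+b_3=-v_1/v_0$ give $\langle X\mathbf{e}_1,\mathbf{e}_1\rangle+\langle X\mathbf{e}_3,\mathbf{e}_3\rangle=-3\sigma_2$. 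Hence $\langle X\mathbf{e}_2,\mathbf{e}_2\rangle=\sigma_2$, and $\operatorname{Tr}(X-\sigma_2I_2)=-3\sigma_2+\sigma_2-3\sigma_2=-5\sigma_2$. So the computation returns $-5\sigma_2=-5\sigma_2$. Once $\operatorname{Tr}X$ has been used to determine $\langle X\mathbf{e}_2,\mathbf{e}_2\rangle$, subtracting $\sigma_2I_2$ adds no information; the ``$-4\sigma_2=-3\sigma_2$'' comes from two arithmetic slips.

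\textbf{Case (1) has the same circularity.} The step of Proposition~\ref{Harnack_part_S_5} that forces $\lambda=0$ reuses the trace: \eqref{E1} is itself the trace identity. Equations \eqref{eq2:z} and \eqref{eq2:z5} give $r_{11}=\lambda\bigl(2+\tfrac{v_0}{v_1}a_1\bigr)$ and $r_{33}=\lambda\bigl(2+\tfrac{v_0}{v_1}b_3\bigr)$. These sum to $3\lambda$ when $a_1+b_3=-v_1/v_0$, so $r_{22}=-2\lambda$ is consistent with $\operatorname{Tr}R=\lambda$, and $r_{11}=r_{33}=0$ does not follow. Moreover, $a_1+b_3=-v_1/v_0$ is not justified in either proposition. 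Real numbers with equal real parts are equal, so $a_1=b_3$ is a single root of the quadratic, and Vieta says nothing about $a_1+b_3$. As written, nothing in your Steps~2--3 rules out a nonzero eigenvalue of $R$.

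\textbf{What is missing.} You need an argument that genuinely forces nilpotency, and one can be extracted directly from \eqref{KER}.
\begin{enumerate}
\item Since $\sigma(T)\subset\mathbb{D}$, the identity $z(zI-T)^{-1}v(z)=-(I-zT^*)^{-1}v(z)$ on $\mathbb{T}$ equates rational functions whose poles lie in $\mathbb{D}$ and outside $\overline{\mathbb{D}}$ respectively. Hence both sides equal a polynomial $-F(z)=-\sum_{k=0}^{4}F_kz^k$ with $(I-zT^*)F(z)=v(z)$ and $(zI-T)F(z)=-zv(z)$.
\item Set $f_1=v_1\mathbf{e}_1+v_0A^*$, $f_3=v_1\mathbf{e}_3+v_0B$ and $f_2=R^*f_1$. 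Comparing coefficients gives $Rf_1=0$, $R^*f_3=0$, $Rf_3=f_2$, $Rf_2=f_1+v_1\mathbf{e}_1$ and $R^*f_2=f_3+v_1\mathbf{e}_3$. It also gives $Af_1=B^*f_3=2v_0$ and $Af_2=Af_3=B^*f_1=B^*f_2=0$.
\item From $\langle f_2,f_1\rangle=\langle f_1,Rf_1\rangle=0$, $\langle f_2,f_3\rangle=\langle f_3,R^*f_3\rangle=0$ and $Af_2=B^*f_2=0$, one gets $f_2=y\mathbf{e}_2$ with $y\neq0$.
\item From $\langle Rf_2,f_3\rangle=\langle f_2,R^*f_3\rangle=0$ and $Af_3=B^*f_1=0$, one gets $a_3=b_1=0$ and $a_2b_2=0$.
\item Computing $r_{22}$ from $R\mathbf{e}_2$ and from $R^*\mathbf{e}_2$ gives $v_0\overline{a_2}/y=\overline{v_0b_2/y}$. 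So $|a_2|=|b_2|$, and hence $a_2=b_2=0$.
\item Then $f_1\in\mathbb{C}\mathbf{e}_1$ and $f_3\in\mathbb{C}\mathbf{e}_3$, so $R\mathbf{e}_1=0$, $R\mathbf{e}_2\in\mathbb{C}\mathbf{e}_1$ and $R\mathbf{e}_3\in\mathbb{C}\mathbf{e}_2$. Thus $R$ is nilpotent.
\item The remaining relations ($a_1=b_3$ a root of $v_0x^2+v_1x-2v_0=0$, and $r_{23}=\overline{r_{12}}$) give exactly the two families.
\end{enumerate}
An argument of this kind has to replace the trace bookkeeping before your reduction proves the theorem.
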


\begin{remark}This result shows that even when the dimension is odd, the Harnack part of a truncated shift can be larger than the set described for dimension three, which consists of a single unitary orbit given by \eqref{S3}. In dimension five, the first form is unitarily equivalent to $S$,
	\begin{equation*}\label{S5}
		T = U_{\theta}^{*} S U_\theta \quad \text{with} \quad U_\theta = \operatorname{diag}\left(1, 1, e^{i\theta}, 1, 1 \right), \qquad \theta \in \mathbb{R}.
	\end{equation*}
	Moreover, $\|T\| = \|S\|$. This corresponds to the minimum possible norm among nilpotent operators in the Harnack part of $S$. In contrast, the second form is given by
	\begin{equation*}\label{S5c2}
		T = U_{\theta}^{*} \begin{bmatrix}
			0 & \sqrt{3} & 0 & 0 & 0 \\
			0 & 0 & \dfrac{1}{\sqrt{2}} & 0 & 0 \\
			0 & 0 & 0 & \dfrac{1}{\sqrt{2}} & 0 \\
			0 & 0 & 0 & 0 & \sqrt{3} \\
			0 & 0 & 0 & 0 & 0
		\end{bmatrix} U_\theta \quad \text{with} \quad U_\theta = \operatorname{diag}\left(1, -1, -e^{i\theta}, -1, 1 \right), \qquad \theta \in \mathbb{R}.
	\end{equation*}
	Consequently, the matrix $T$ lies in the unitary orbit of a matrix whose norm is strictly greater than that of the truncated shift. Thus Theorem  \ref{Harnack_part_S5} it answers negatively to the questions given in \cite{CassierBenharrat2020} for the case of odd dimension and $\rho=2$.
\end{remark}
\section{Conclusion}

In this paper, we have provided a complete description of the Harnack part of the $w_2$-normalized truncated shift $S$ in the class $C_2(\mathbb{C}^5)$. Our main result, Theorem \ref{Harnack_part_S5}, establishes that the structure of this equivalence class is richer than what was previously known for lower dimensions. Specifically, we showed that any operator $T$ Harnack equivalent to $S$ must assume one of two distinct forms: either unitary equivalent to $S$ (Case 1) or a specific structured matrix with different norm properties (Case 2).

These findings highlight a sharp contrast between the Harnack part in dimension $5$ and its counterparts in lower dimensions. Notably, our analysis proceeded without the additional assumption $\|T\| = \|S\|$, thereby allowing us to uncover the second family of solutions. This suggests that the geometry of Harnack parts in $C_\rho(H)$ becomes increasingly complex as the dimension grows, even for structured operators like truncated shifts.

Several questions remain open for future investigation:

\noindent\textbf{Question 1.} Does a similar dichotomy exist for truncated shifts of size $n+1$ with $n > 4$? If so, does the number of distinct forms increase with the dimension?

\noindent\textbf{Question 2.} Theorem \ref{Harnack_part_S5}, together with \cite[Theorem 3.3]{CaNaBe2024} and \cite[Theorems 3.1 and 3.3]{CaBeBel2018}, demonstrates that the Harnack equivalence class of truncated shifts exhibits extreme spectral rigidity: for dimensions up to $5$, all operators in the class must have the same spectrum as $S$. Does this property of extreme spectral rigidity hold for dimensions greater than $5$? In other words, is the nilpotency of $S$ preserved in its Harnack part?
	
\noindent\textbf{Question 3.} Extending these results to the general class $C_\rho(H)$ for $\rho \neq 2$ remains a challenging problem, as the kernel conditions become more intricate. Additionally, investigating the Harnack parts of other classes of operators, such as weighted shifts or nilpotent operators with different Jordan structures, could provide further insight into the classification of operators under Harnack equivalence.

\medskip

\noindent\textbf{Acknowledgments.} (Optional: Add funding information or thanks to colleagues here.)

\medskip

\noindent\textbf{Conflict of Interest.} The authors declare no conflict of interest.

\medskip

\noindent\textbf{Data Availability.} Data sharing not applicable to this article as no datasets were generated or analysed during the current study.


\end{document}